\documentclass[final,onefignum,onetabnum]{siamart251216}

\usepackage{lipsum}
\usepackage{amsfonts}
\usepackage{graphicx}
\usepackage{multirow}
\usepackage{epstopdf}
\usepackage{algorithmic}
\ifpdf
  \DeclareGraphicsExtensions{.eps,.pdf,.png,.jpg}
\else
  \DeclareGraphicsExtensions{.eps}
\fi

\newsiamremark{remark}{Remark}
\newsiamremark{hypothesis}{Hypothesis}
\crefname{hypothesis}{Hypothesis}{Hypotheses}
\newsiamthm{claim}{Claim}
\newsiamthm{question}{Question}

\headers{Quantum-inspired FEM}{X.~Wang, J.~H.~Adler, and X.~Hu}

\title{Quantum-inspired methods for finite-element discretizations of the high-dimensional Poisson equation\thanks{Submitted to the editors DATE.
\funding{The work of the last two authors is partially supported by the National Science Foundation (NSF) under grant DMS-2513394.}}}

\author{Xue Wang\thanks{School of Mathematics, Shandong University, 250100 Jinan, China
(\email{wangxsdu@mail.sdu.edu.cn}).}
\and
James H. Adler\thanks{Department of Mathematics, Tufts University, Medford, 02155 MA, USA
  (\email{james.adler@tufts.edu}),
  (\email{xiaozhe.hu@tufts.edu}).}
\and Xiaozhe Hu\footnotemark[3]
}

\usepackage{amsopn}

\usepackage{bm}
\usepackage{algorithm}
\usepackage{algorithmic}
\usepackage{graphicx}
\usepackage{subcaption}

\ifpdf
\hypersetup{
  pdftitle={Quantum-inspired FEM},
  pdfauthor={J.~H.~Adler, X. Hu, and X. Wang}
}
\fi

\begin{document}

\maketitle

% REQUIRED
\begin{abstract}
In recent years, quantum linear system algorithms have been applied to partial differential equations (PDEs), particularly in high-dimensional settings, demonstrating an exponential speedup in dimension. Concurrently, randomized and quantum-inspired classical linear solvers have emerged, showing computational complexity comparable to their quantum counterparts in many application areas. In this paper, we investigate the applicability of these quantum-inspired classical algorithms to PDEs. We provide both upper and lower bounds on their computational complexity, proving that these methods cannot achieve exponential speedup in dimension for discretizations of high-dimensional Poisson problems. Our theoretical findings definitively demonstrate that quantum-inspired classical algorithms are not competitive with quantum algorithms for solving PDEs, confirming that quantum methods retain a significant advantage for high-dimensional problems.
\end{abstract}

% REQUIRED
\begin{keywords}
quantum algorithm, quantum-inspired algorithm, randomized coordinate descent; high-dimensional PDEs; finite-element method
\end{keywords}

% REQUIRED
\begin{MSCcodes}
65D40; 65M30; 68Q12; 68Q25
\end{MSCcodes}

\section{Introduction}
When solving a general partial differential equation (PDE) in $d$-dimensions,
\begin{equation} \label{eqn:PDE-model}
	\mathcal{L} u = f, \ \text{in} \ \mathcal{D}\subset\mathbb{R}^d,
\end{equation}
where $\mathcal{L}$ denotes a general elliptic second-order partial differential operator, most discretizations, such as finite-difference, finite-element, and finite-volume methods, lead to a discrete linear system of equations,
\begin{equation} \label{eqn:Ax=b}
	A \bm{u} = \bm{f},
\end{equation}
where $A$ is the so-called stiffness matrix.  Solving~\eqref{eqn:Ax=b} is challenging in practice because it is typically large-scale and ill-conditioned.   In this work, we consider the Laplacian equation, i.e., $\mathcal{L} = - \Delta$, as an example.  In this case, the condition number $\kappa(A)$ grows as $N^{2/d}$ where $N$ is the total number of degrees of freedom used in the discretization.  Thus, in practice, the most time-consuming part of solving PDEs is solving~\eqref{eqn:Ax=b}, especially when $N$ gets larger and larger.  Moreover, one must deal with the well-known \textit{``curse of dimensionality"} as the problem size grows exponentially with respect to the dimension \cite{werschulz1991computational,ritter1996average}.  

Much work has been dedicated to overcoming these issues in the classical computing setting \cite{benzi1996sparse,bramble1990parallel}. One of the most successful approaches for solving~\eqref{eqn:Ax=b} in PDE applications is the multigrid (MG) method.  MG provides a preconditioner $B$ such that $\kappa(BA)$ is well-conditioned, providing an improved scaling of the computational complexity of the problem with $N$.  However, as we highlight later, this still does not provide an improved scaling with dimension $d$.  Therefore, we look to quantum computing algorithms instead. 
%to address the \textit{``curse of dimensionality"}.  

Recently, there has been extensive literature for solving PDEs using quantum algorithms, see \cite{clader2013preconditioned,berry2014high,montanaro2016quantum,childs2017quantum,lubasch2020variational}.  Here, we consider the application of quantum algorithms to the finite-element method (FEM) in particular.  Following \cite{montanaro2016quantum}, the key idea of \emph{quantum FEM} is to replace the classical algorithm for solving the linear system \eqref{eqn:Ax=b} with a quantum linear solver. There are many quantum linear solvers, i.e., \cite{harrow2009quantum, somma2016quantum, childs2017quantum, subacsi2019quantum,an2022quantum,bravo2023variational,jin2025quantum}, including an approach proposed in \cite{childs2017quantum}, which is an improvement of the famous Harrow,
Hassidim, and Lloyd (HHL) algorithm \cite{harrow2009quantum,somma2016quantum}.  For more recent developments of quantum linear solvers, we refer to the following survey paper \cite{x6gh-d8gh}.  As is discussed below, using such a quantum linear solver, and those developed more recently, achieves exponential speedup in terms of the dimension. 

However, while quantum computing hardware has developed rapidly in the past few years \cite{de2021materials,wintersperger2023neutral}, the ability to solve large-scale problems is still limited.  Thus, the question is whether the ideas in the quantum algorithms can be leveraged and still applied on classical machines.  This leads to the notion of \emph{quantum-inspired} algorithms.  Recently, quantum-inspired classical algorithms have been proposed for a variety of applications, which achieve comparable computational complexity compared with their fully quantum counterparts \cite{tang2019quantum,tang2021quantum,chia2022sampling,gilyen2022improved,shao2022faster}. 
In this work, we are interested in answering the question, 
\begin{question}\label{ques1}
Can quantum-inspired algorithms achieve comparable complexity for the FEM in particular and, importantly, achieve exponential speedup in dimension, overcoming the curse of dimensionality?
\end{question}
In this context, we again consider such algorithms for the linear solver portion of the solution method.  To the best of our knowledge, the state-of-the-art quantum-inspired linear solvers for various types of linear systems are proposed and summarized in  \cite{shao2022faster}.  The stiffness matrices we consider here, resulting from FEM discretizations of the Laplace equation, $A\in\mathbb{R}^{N\times N}$, are sparse and symmetric positive definite (SPD).  The randomized coordinate descent (RCD) method proposed in \cite{leventhal2010randomized} is considered to be the best known quantum-inspired algorithm according to \cite{shao2022faster}.
Therefore, in this manuscript, we first investigate the computational complexity of the RCD algorithm applied to \eqref{eqn:Ax=b} in both an unpreconditioned and preconditioned setting.  Unfortunately, by establishing both upper and lower bounds on the complexity, we conclude that it is not possible to achieve exponential speedup in dimension when applied to the FEM. However, one might ask,
 \begin{question}\label{ques2}
Whether there exists a yet-undiscovered quantum-inspired classical algorithm that could achieve the same exponential speedup in dimension as its quantum counterparts when applied to FEM discretizations of elliptic PDEs?
 \end{question}
To address this, in the second part of this work, we consider communication complexity, following the approach in \cite{mande2025lower} for establishing lower bounds on quantum-inspired classical algorithms for general linear systems. More precisely, we analyze the complexity of solving FEM linear systems via quantum-inspired classical algorithms within a sampling and query access model defined in \cite{chia2022sampling}. While the complexity analysis in \cite{mande2025lower} provides a lower bound for general matrices, which inherently applies to systems discretized from PDEs, this general bound may not be tight for the specific subset of FEM linear systems we consider here. Consequently, we provide a lower bound specifically for linear systems discretized from the PDEs described in \eqref{eqn:Ax=b}.  Again, we confirm the negative result, that no such algorithm can overcome the exponential scaling with dimension. 
%All results are summarized in \Cref{tab-introd}.  
Therefore, to truly see polynomial scaling, one must consider a fully quantum algorithm. Finally, we note that while the main focus of this manuscript is on finite-element discretizations, the results hold for most discretization approaches applied to elliptic PDEs.

 The rest of the paper is organized as follows: \Cref{sec:preliminaries} sets up some general notation and recalls the classical and quantum FEM complexity analysis for the Poisson model. \Cref{sec:RCD-FEM} introduces the randomized and quantum-inspired algorithms for solving linear systems, and derives the upper and lower bounds on the computational complexity for both the original system and an expanded system (MG preconditioned) using RCD. Next, in \Cref{sec:QICA-FEM}, we derive a lower bound for general quantum-inspired algorithms based on communication complexity. Concluding remarks are provided in \Cref{sec:conclusions}.

\section{Preliminaries} \label{sec:preliminaries}
In this section, we first recall the classical FEM and its complexity analysis. We then introduce quantum algorithms and their complexity analysis, which demonstrates an exponential speedup in terms of the dimension. Finally, we briefly review the state-of-the-art randomized and quantum-inspired classical algorithms for solving general linear systems.

\subsection{Notation}
Before beginning, we clarify our notation in terms of comparing the orders of terms based on lower and upper bounds. Given functions $f(n)$ and $g(n)$, and constants $c > 0$ and $n_0 > 0$, 
\begin{itemize}
	\item $f(n) = \mathcal{O}(g(n))$, means that $0 \leq f(n) \leq c \cdot g(n)$ for $n\geq n_0$.
	\item $f(n) = {\Omega}(g(n))$, means that $0 \leq c \cdot g(n) \leq f(n)$ for $n\geq n_0$.
	\item $f(n) = {\Theta}(g(n))$ means that there hold $f(n) = \mathcal{O}(g(n))$ and $f(n) = \Omega(g(n))$.
	\item $f(n) = \operatorname{poly}(n) $ means $f(n) = \Theta(n^p)$ for some $p \in \mathbb{R}$.
\end{itemize}

\subsection{Classical FEM and its Complexity Analysis} \label{subsec:CFEM-complexity}
For the remainder of the paper, we focus on the study of the Poisson equation, i.e., $\mathcal{L} = -\Delta$, in arbitrary dimension, $d$. Consider an open, bounded, connected, and sufficiently regular domain $\mathcal{D}\subset\mathbb{R}^d$. The goal is then to solve,
\begin{align}
    -\Delta u = f,&\quad   \text{ in }  \mathcal{D}, \label{eqn:Poisson} \\
    u = 0, &\quad  \text{ on } \Gamma_D, \label{eqn:Poisson-DBC} \\
    \frac{\partial u}{\partial n} = 0, &\quad  \text{ on } \Gamma_N, \label{eqn:Poisson-NBC}
\end{align}
with boundary $\partial\mathcal{D} = \Gamma_D\cup\Gamma_N$, and where $f$ is the given source term. Given a sufficiently smooth ``test function" $v\in V = \{v\in [H^1(\mathcal{D})]^d: v|_{\Gamma_D} = 0\}$, one can multiply both sides by $v$ and integrate by parts to obtain the corresponding weak formulation of \cref{eqn:Poisson}-\cref{eqn:Poisson-NBC}: Find $u\in V$ such that
\begin{align*}
    a(u,v) = (f,v), \quad \forall\, v\in V,
\end{align*}
where the bilinear form is defined as
\begin{align*}
    a(u,v) = \int_{\mathcal{D}} (\nabla u\cdot \nabla v)\mathrm{~d} \bm{x}.
\end{align*}

The FEM then considers the solution and test functions in some finite-dimensional subspaces $V_h \subset V$. Denoting the approximation solution as $u_h$, then the discretization scheme is as follows: Find $u_h\in V_h$ such that,
\begin{align*}
    a(u_h,v_h) = (f,v_h), \quad \forall\, v_h\in V_h. 
\end{align*}
Commonly $V_h$ is taken to be the space of piecewise polynomial functions of some degree $p$. For simplicity, we consider $p=1$, piecewise linear functions.  Then, the approximate solution to Poisson’s equation can be
determined by solving the linear system \eqref{eqn:Ax=b}, where the corresponding matrix is derived from $a(\cdot,
\cdot)\mapsto A$.  We have also assumed an underlying triangulation of the domain, decomposed into elements with mesh spacing $h$.  In particular, assume that we use a quasi-uniform mesh,  which gives a relationship between the total number of degrees of freedom, $N$, the mesh spacing, and the dimension of the problem as $N=\Theta(h^{-d})$. %\textcolor{blue}{James: should we make this more precise?}

%In general, $N$ is the total number of degree and $h$ is the mesh size. For examples,  when $d = 1$, we consider $\mathcal{D} = [0,1]$, divided into $N$ intervals of size $h$.  We will make use of the matrix stencil when solving the linear system, so we can reduce the memory usage. 

%In this section, we compare the computational complexity of RCD and CG for the classical finite element discretization below a specified tolerance. 

Let $u$, $u_h$, and $u_h^k$ denote the exact solution, the exact finite-element solution in $V_h$, and the finite-element approximation obtained by an iterative linear solver, respectively.  Since $u_h$ and $u_h^k$ are finite-element functions, we also denote them by their vector representations, $\bm{u}$ and $\bm{u}^k$, respectively.  Using the triangle inequality, the relative error of the method is then decomposed into two parts,
\begin{equation*}
	\frac{| u - u_h^k |_1}{\|f\|} \leq \overbrace{\frac{| u - u_h |_1}{\|f\|}}^{\text{discretization error}}+ \overbrace{\frac{| u _h - u_h^k|_1}{\|f\|}}^{\text{linear solver error}}.
\end{equation*}
Here, $|\cdot|_{m}$ is the Sobolev seminorm \( |v|_m := (\sum\limits_{\alpha, |\alpha|=m} \|\partial^{\alpha} v\|^2)^{\frac12} \), where \(\alpha = (\alpha_1, \ldots, \alpha_d)\) is a multi-index, \(|\alpha| := \sum_{i=1}^d \alpha_i\), and  
\(
\partial^\alpha := \left( \frac{\partial}{\partial x_1} \right)^{\alpha_1} \cdots \left( \frac{\partial}{\partial x_d} \right)^{\alpha_d}.
\)
The full Sobolev \(m\)-norm is 
\(
\| v \|_{m} := \sum_{i=0}^m |v|_i,
\)
and we note that for \(m = 0\), \(|v|_0 = \| v \|_{0} = \| v \|\) is the $L^2(\mathcal{D})$ norm. 

We would like to produce a solution to meet a desired accuracy
\begin{equation} \label{eqn:target-accuracy}
	\frac{| u - u_h^k |_1}{\|f\|} = \mathcal{O}( \operatorname{poly}(d) \epsilon).
\end{equation} 
Therefore, it is sufficient to require that the discretization error and the linear solver error each satisfy 
\[\displaystyle \frac{| u - u_h |_1}{\|f\|} =\mathcal{O}( \operatorname{poly}(d) \epsilon), \quad \text{and}\quad \displaystyle\frac{| u _h - u_h^k|_1}{\|f\|}= \mathcal{O}( \operatorname{poly}(d) \epsilon).\]
Focusing on the discretization error first, we note the standard $H^1$ error estimates for the FEM with piecewise linear basis \cite{brenner2008mathematical,jiang2025polynomial}, under a full regularity assumption, 
\begin{equation*}
	| u - u_h |_1 \leq C  \operatorname{poly}(d)  h \| u \|_{2} \leq C \operatorname{poly}(d) h\|f\| .
\end{equation*}
Here, $C$ is generic constant independent of $h$ and $d$.   The exiplicit formulation of the factor $\operatorname{poly}(d)$ depends on the shape-regularity and quasi-uniformity of the mesh (we refer to \cite{jiang2025polynomial} for details).  Thus, to achieve the targeted accuracy \eqref{eqn:target-accuracy}, we  require
%\begin{equation*}
%	C h \leq \epsilon.	
%\end{equation*}
%This implies that the mesh size needs to be
\begin{equation}\label{h-epsilon}
	h = \Theta(\epsilon).
\end{equation}
Thus, the discretization error determines the mesh size and, consequently, the size of the linear system:
\begin{equation}\label{eqn:estimate-N}
	N = \Theta(h^{-d}) = \Theta(\epsilon^{-d}).
\end{equation} 

The remaining part of the overall computational complexity is then determined by the cost of solving the linear system to meet the accuracy criterion\newline $\displaystyle\frac{| u _h - u_h^k|_1}{\|f\|} =  \mathcal{O}( \operatorname{poly}(d) \epsilon)$.  Note that $| u_h - u_h^k |_1 = \| \bm{u} - \bm{u}^k \|_A$,  where $A \in \mathbb{R}^{N\times N}$ is the corresponding stiffness matrix and $\|\cdot\|_A = \sqrt{a(\cdot,\cdot)}$ is the energy norm.   Therefore, it is sufficient to require that
\begin{equation}\label{eqn:lin_solve_error}
	\frac{\| \bm{u} - \bm{u}^k \|_A}{\|f\|}  =  \mathcal{O}( \operatorname{poly}(d) \epsilon).
\end{equation}

For the Poisson problem discretized with linear finite elements, a common choice of linear solver is the \textit{conjugate gradient} (CG) method \cite{saad2003iterative}. Assuming an initial guess of $\bm{u}^0 = \bm{0}$ and that $h<1$, standard convergence analysis yields,
\begin{align*}
	\| \bm{u} - \bm{u}^k \|_A &\leq 2 \left( \frac{\sqrt{\kappa(A)} - 1}{ \sqrt{\kappa(A)}+1} \right)^k \| \bm{u} \|_A = 2 \left( \frac{\sqrt{\kappa(A)} - 1}{ \sqrt{\kappa(A)}+1} \right)^k | u_h |_1 \\ 
	&\leq 2 \left( \frac{\sqrt{\kappa(A)} - 1}{ \sqrt{\kappa(A)}+1} \right)^k (| u |_1 +  |u -u_h |_1) \\
	& \leq C \operatorname{poly}(d) \left( \frac{\sqrt{\kappa(A)} - 1}{ \sqrt{\kappa(A)}+1} \right)^k (\| u\|_{2} + h \| u \|_{2}) \\
	%& = 2 (1 + h) \operatorname{poly}(d) \left( \frac{\sqrt{\kappa(A)} - 1}{ \sqrt{\kappa(A)}+1} \right)^k \| u\|_{2}  
	& \leq C \operatorname{poly}(d) \left( \frac{\sqrt{\kappa(A)} - 1}{ \sqrt{\kappa(A)}+1} \right)^k \| f\|.
\end{align*}
Together with~\eqref{eqn:lin_solve_error}, %we have
%\begin{equation*}
%	C \left( \frac{\sqrt{\kappa(A)} - 1}{ \sqrt{\kappa(A)}+1} \right)^k   = \mathcal{O}(\epsilon). 
%\end{equation*} 
we find that the number of CG iterations required to achieve the $\epsilon$ tolerance is at most
\begin{equation*}
	k = \mathcal{O}(\sqrt{\kappa(A)}\log\frac{1}{\epsilon}).
\end{equation*}
The cost of each CG iteration is $\Theta(dN)$ and the upper bound of the condition number for the discretized system is %$\kappa(A) = \Theta(h^{-2}) = \Theta(N^{2/d})$ 
$\kappa(A) = \Theta(\operatorname{poly}(d) h^{-2}) = \Theta(\operatorname{poly}(d) N^{2/d})$  \cite{brenner2008mathematical, jiang2025polynomial}. 
Then, the total complexity of the classical FEM using CG is
\begin{equation*}
	\mathcal{O}(dNk) = \mathcal{O}(dN \sqrt{\kappa(A)}\log\frac{1}{\epsilon} ) = %\mathcal{O}(dN N^{\frac{1}{d}} \log \frac{1}{\epsilon}) =
	 \mathcal{O}(\operatorname{poly}(d)  N^{\frac{d+1}{d}} \log \frac{1}{\epsilon}).
\end{equation*}
Finally, using the relationship $N=\Theta(\epsilon^{-d})$~\eqref{eqn:estimate-N}, the total complexity becomes
\begin{equation*}
	\mathcal{O}(\operatorname{poly}(d) {\epsilon}^{-(d+1)} \log \frac{1}{\epsilon}),
\end{equation*}
which implies the complexity exponentially depends on the dimension, which leads to the so-called \emph{``curse of dimensionality"}.

Similar PDE problems and discretizations yield equivalent results.  Often an ``optimal'' preconditioner can be applied in order improve the complexity.  For example, in an ideal setting, it is possible to construct a \textit{geometric multigrid} (GMG) method as a preconditioner $B$, such that $\kappa(BA) = \Theta(\operatorname{poly}(d))$ \cite{jiang2025polynomial}.   However, since MG is applied implicitly, each preconditioned CG (PCG) iteration still costs $\Theta(dN)$, and one would obtain the overall complexity 
\begin{equation*}
	 	\mathcal{O}(dN k) = \mathcal{O}(\operatorname{poly}(d)  N \log \frac{1}{\epsilon}) = \mathcal{O}( \operatorname{poly}(d)  \epsilon^{-d} \log \frac{1}{\epsilon} ),
 \end{equation*}
 which is still exponential in $d$.

\subsection{Quantum FEM and its Complexity Analysis} 
As the state-of-the-art classical algorithm can not overcome the \emph{``curse of dimensionality"}, it is natural to ask whether a quantum algorithm can.  As discussed above, in the context of the FEM, a quantum algorithm is employed at the linear solver stage.
Thus, similarly to the classical FEM, the complexity of quantum FEM is mainly determined by the complexity of using quantum algorithms to solve the linear system.  Following \cite{montanaro2016quantum}, the complexity of using the quantum linear solver proposed in \cite{childs2017quantum} for solving a linear system to a given accuracy $\epsilon$ is
\begin{equation*}
	\mathcal{O}(\frac{s \kappa(A)}{\epsilon}  \operatorname{polylog}(N \frac{s \kappa(A)}{\epsilon}) ),
\end{equation*}
where $s$ is the maximal number of nonzero entries in each row of $A$. In the linear FEM setting, $s = \Theta(d)$, $\kappa(A) = \Theta(\operatorname{poly}(d)\epsilon^{-2})$, and $N = \Theta(\epsilon^{-d} )$. Therefore, the total complexity of a quantum FEM is
\begin{equation*}
	\mathcal{O}( \frac{d \operatorname{poly}(d) \epsilon^{-2}}{\epsilon} \operatorname{polylog}( \epsilon^{-d}  \frac{d \epsilon^{-2}}{\epsilon}) ) = \mathcal{O} (\operatorname{poly}(d) \epsilon^{-3} \operatorname{polylog}(\frac{1}{\epsilon}) ).
\end{equation*}
Now we see the exponential speed up in terms of the dimension.   More precisely, to achieve accuracy $\epsilon$ in spatial dimension $d$, the runtime of the classical FEM scales as $\mathcal{O}(\epsilon^{-(d+1)})$, while the quantum FEM scales as $\mathcal{O}(\epsilon^{-3})$.  Therefore, for higher-dimensional problems, the quantum speedup would be significant. 

%\textcolor{red}{(I notice that we need to be careful about the dimension $d$ hidden in the constants. I think we should investigate it carefully. --XH)}

\section{Quantum-inspired Classical Algorithms for FEM using RCD} \label{sec:RCD-FEM}
As mentioned in the introduction, we are interested in whether a quantum-inspired classical algorithm can achieve complexity comparable to the fully quantum algorithm described above. To address this question, we consider the randomized coordinate descent (RCD) method \cite{leventhal2010randomized}, which is the current state-of-the-art quantum-inspired algorithm for solving sparse SPD linear systems and achieves comparable complexity in terms of the number of degrees of freedom. RCD is a natural candidate in this setting because it requires only sampling (S) and query (Q) access to the matrix entries, placing it squarely within the SQ access model under which quantum-inspired algorithms are typically analyzed. Moreover, its convergence rate is governed by the scaled condition number of the system $\kappa_F(A):= \| A \|_F^2/ \lambda_{\min}(A)$, so that to reach a target accuracy it requires $\mathcal{O}(\kappa_F(A) \log(1/\epsilon))$ iterations, each of which can be carried out using only a constant number of sampling and query operations. As a result, its overall complexity inherits the conditioning of the underlying finite-element discretization, which makes it well suited for a direct comparison against the dimension-independent complexity of the quantum algorithms.

We briefly recall the RCD method shown in \cref{alg:RCD-A}.

\begin{algorithm}[htbp]
\caption{Randomized Coordinate Descent}
\label{alg:RCD-A}
\begin{algorithmic}[1]
    \STATE Initialize \( k \leftarrow 0 \);
    \WHILE{$k\leq K$}
        \STATE Choose $i_k$ from $\{1,2,3,...,N\}$ with equal probability;
         \STATE \( \bm{u}^{k+1} = \bm{u}^k + 	\omega_{i_k} \frac{\bm{f}(i_k) - A(i_k,:)\bm{u}^k}{A(i_k, i_k)} \bm{e}_{i_k}  \);
         \STATE \( k \leftarrow k+1\);
    \ENDWHILE
    \RETURN \( \bm{u}^{K+1} \)
\end{algorithmic}
\end{algorithm}
\noindent Again, $\bm{u}^k$ denotes the solution of the $k$-th iterate and $\bm{e}_{i_k}$ is the standard unit basis vector with $1$ in the $i_k$-th entry. We choose a weight, $0< \omega_{i_k} <2$, and $i_k$ is chosen randomly using a probability distribution $p_i$ satisfying $\sum_{i}^{N} p_i = 1$. Specific choices of $\omega_{i_k}$ and $p_i$ are given later based on the theoretical analysis.  

% we discuss the convergence results when the matrix $A$ comes from discretizing \eqref{eqn:Poisson}-\eqref{eqn:Poisson-NBC} via the linear FEM.  We establish both upper and lower bounds on the complexity of this quantum-inspired FEM. 
% % Thus, allow us to answer the question negatively. 
We start our theoretical study of quantum-inspired FEM by investigating the use of RCD to solve the discrete linear system \eqref{eqn:Ax=b} resulting from a finite-element discretization of the Poisson equation, \eqref{eqn:Poisson}-\eqref{eqn:Poisson-NBC}. To see if ``preconditioning'' can help, we then consider an expanded system, which represents a complete MG hierarchy of the discretized system and use RCD on that larger problem.  In both cases, we derive the convergence rate of RCD within the FEM framework, which leads to an upper bound on the computational complexity.  This result reveals that there is no exponential speedup in dimension in the worst-case scenario.  Next, we derive a convergence lower bound for RCD, yielding a corresponding complexity lower bound.  This then shows that, to achieve the $\epsilon$-accuracy, the required complexity grows at least exponentially with dimension. Together, these findings demonstrate that quantum algorithms are superior than quantum-inspired classical algorithm in the FEM setting for solving high-dimensional PDEs.

\subsection{Original (Unpreconditioned) System}\label{sec:original}
For the linear system~\eqref{eqn:Ax=b}, the convergence of RCD developed in~\cite{leventhal2010randomized} can be directly employed.  
Recall the following convergence results of RCD.
%\begin{theorem}
%	If $\omega_i$ and $p_i$	 are chosen such that $\sum_{i=1}^N\frac{p_i\omega_{i}(2-\omega_{i})}{A(i,i)} = \frac{1}{\lambda}$, then the RCD applied on the original linear system~\eqref{eqn:Ax=b} has the following error decay in the expectation,
%	\begin{equation} \label{ine:RCD-error-expectation-A}
%		\mathbb{E}(\| \bm{x} - \bm{x}^{k+1} \|_A^2)  \leq \left(  1 -   \frac{\lambda_{\min}(A)}{\lambda} \right)  \mathbb{E}(\| \bm{x} - \bm{x}^k \|_A^2),
%	\end{equation}
%	where $\lambda_{\min}(A)$ is the smallest nonzero eigenvalue of $A$.  
%\end{theorem}
\begin{lemma}[\cite{leventhal2010randomized}]\label{thm:RCD-error-expectation-A}
	If $\displaystyle p_i = \frac{A(i,i)}{\operatorname{tr}(A)}$ and $\omega_{i} = 1$,  then RCD applied on the original linear system~\eqref{eqn:Ax=b} has the following error decay in expectation,
	\begin{equation} \label{ine:RCD-error-expectation-A}
		\mathbb{E}(\| \bm{u} - \bm{u}^{k+1} \|_A^2)  \leq \left(  1 -   \frac{\lambda_{\min}(A)}{\operatorname{tr}(A)} \right) \mathbb{E}( \| \bm{u} - \bm{u}^k \|_A^2),
	\end{equation}
	where $\lambda_{\min}(A)$ is the smallest nonzero eigenvalue of $A$ and $\operatorname{tr}(A)$ is the trace of $A$.
\end{lemma}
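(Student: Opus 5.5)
We prove the one-step contraction conditioned on $\bm{u}^k$ and then take total expectation. Write $\bm{e}^k = \bm{u} - \bm{u}^k$ and $\bm{r}^k = \bm{f} - A\bm{u}^k = A\bm{e}^k$. With $\omega_{i}=1$, the update on coordinate $i$ is $\bm{e}^{k+1} = \bm{e}^k - \frac{\bm{r}^k(i)}{A(i,i)}\bm{e}_i$. This is exactly the $A$-orthogonal projection of $\bm{e}^k$ that removes its component along $\bm{e}_i$, i.e., exact line minimization of the energy along coordinate $i$.

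The first step is a direct expansion of the energy norm:
\begin{equation*}
\|\bm{e}^{k+1}\|_A^2 = \|\bm{e}^k\|_A^2 - 2\frac{\bm{r}^k(i)}{A(i,i)}\,\bm{e}_i^T A\bm{e}^k + \frac{\bm{r}^k(i)^2}{A(i,i)^2}A(i,i) = \|\bm{e}^k\|_A^2 - \frac{\bm{r}^k(i)^2}{A(i,i)}.
\end{equation*}

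Next we average over $i$ with $p_i = A(i,i)/\operatorname{tr}(A)$. The diagonal weights cancel, giving
\begin{equation*}
\mathbb{E}\bigl(\|\bm{e}^{k+1}\|_A^2 \mid \bm{u}^k\bigr) = \|\bm{e}^k\|_A^2 - \frac{\|\bm{r}^k\|^2}{\operatorname{tr}(A)}.
\end{equation*}

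The key inequality is $\|\bm{r}^k\|^2 = (\bm{e}^k)^T A^2 \bm{e}^k \ge \lambda_{\min}(A)\,(\bm{e}^k)^T A\bm{e}^k = \lambda_{\min}(A)\|\bm{e}^k\|_A^2$. It follows by expanding $\bm{e}^k$ in an eigenbasis of the SPD matrix $A$, since $\lambda_j^2 \ge \lambda_{\min}\lambda_j$ for every eigenvalue. Combining this with the previous display and then taking total expectation (tower property) yields \eqref{ine:RCD-error-expectation-A}.

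The only delicate point is the phrase ``smallest nonzero eigenvalue''. This matters when $\Gamma_D=\emptyset$ (pure Neumann problem) and $A$ is only positive semidefinite. In that case we assume $\bm{f}\in\operatorname{range}(A)$ and measure errors modulo $\ker A$. The update changes $\bm{e}^k$ by a multiple of $\bm{e}_i$, but $\|\cdot\|_A$ ignores kernel components. So we decompose $\bm{e}^k = \bm{e}_0 + \bm{e}_\perp$ with $\bm{e}_0\in\ker A$ and $\bm{e}_\perp \perp \ker A$. Then $A\bm{e}^k = A\bm{e}_\perp$, and the eigenbasis argument, applied only over the nonzero eigenvalues, gives $\|A\bm{e}_\perp\|^2 \ge \lambda_{\min}\|\bm{e}_\perp\|_A^2 = \lambda_{\min}\|\bm{e}^k\|_A^2$. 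Beyond this, the argument is routine; the real content is choosing the sampling probabilities so that the $A(i,i)$ factors cancel.
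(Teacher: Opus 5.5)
Your proof is correct and matches the paper's approach. The paper cites Leventhal--Lewis for this lemma rather than reproving it, but its proofs of \Cref{thm-lower-bound-origin} and \Cref{thm:RCD-expanded-cnvergence} do the same computation you do: they derive the identity $\mathbb{E}(\|\bm{u}-\bm{u}^{k+1}\|_A^2 \mid \bm{u}^k) = ((I - A/\operatorname{tr}(A))(\bm{u}-\bm{u}^k), \bm{u}-\bm{u}^k)_A$ and then apply an eigenvalue bound, and your splitting $\bm{e}^k = \bm{e}_0 + \bm{e}_\perp$ properly justifies the ``smallest nonzero eigenvalue'' wording in the semidefinite (pure Neumann) case, which the paper leaves implicit.
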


\begin{remark} \label{rem:RCD-expend-importance-sampling-A}
	In the FEM setting, based on the analysis in \cite{jiang2025polynomial}, by using the Poincar\'{e} inequality, we know the estimate for the smallest eigenvalue of $A$,
	\begin{align}\label{eigmin_bound}
		\lambda_{\min}(A) = \Theta( \operatorname{poly}(d)h^{d}) =  \Theta( \operatorname{poly}(d) N^{-1}),
	\end{align} 
	and, in addition, since $\operatorname{tr}(A) = \sum_{i}^N A(i,i)$ and $A(i,i)= \Theta(d h^{d-2})$ \cite{brenner2008mathematical}, we have
	\begin{align}\label{trace_bound}
		\operatorname{tr}(A) = \Theta(dNh^{d-2}) = \Theta(dh^{-2}) = \Theta(dN^{2/d}).
	\end{align}
Combining \eqref{eigmin_bound}-\eqref{trace_bound} with \eqref{eqn:estimate-N}, \eqref{ine:RCD-error-expectation-A} becomes
	\begin{align*}
		\mathbb{E}(\| \bm{u} - \bm{u}^{k+1} \|_A^2)  & \leq \left(  1 -   \Theta(\operatorname{poly}(d)N^{-(1+\frac2d)}) \right)  \mathbb{E}(\| \bm{u} - \bm{u}^k \|_A^2) \\
        & = \left(  1 -   \Theta(\operatorname{poly}(d) \epsilon^{d+2}) \right)  \mathbb{E}(\| \bm{u} - \bm{u}^k \|_A^2).
	\end{align*}
	This, as we discuss later, leads to an upper bound of the computational complexity that still depends on the dimension $d$ exponentially.  
	\end{remark}
	
\subsubsection{A Complexity Upper Bound} \label{sec:original-upper-bound}
To derive the upper bound, we assume that the initial guess is $\bm{u}^0 = \bm{0}$.  The following theorem summarizes a computational complexity upper bound.

\begin{theorem}\label{thm-orig-upper}
	For the original linear system \eqref{eqn:Ax=b} arising from the\newline $d$-dimensional Poisson problem \eqref{eqn:Poisson}-\eqref{eqn:Poisson-NBC}, assume that $u$ and $u_h^k$ are the exact solution and finite-element solution via quantum-inspired FEM, satisfying $\|u-u_h^k\|/\|f\| = \mathcal{O}(\operatorname{poly}(d) \epsilon)$. The overall computational complexity is at most\newline $\displaystyle \mathcal{O}(\operatorname{poly}(d) \epsilon^{-(d+2)}\log(\epsilon^{-1}))$ with probability at least $0.99$.
\end{theorem}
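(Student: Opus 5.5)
The plan is to combine the expected-error contraction of \Cref{thm:RCD-error-expectation-A}, made explicit for the FEM setting in \Cref{rem:RCD-expend-importance-sampling-A}, with Markov's inequality to obtain a high-probability statement. Then count the cost per iteration. Write $\rho := 1 - \lambda_{\min}(A)/\operatorname{tr}(A) = 1-\Theta(\operatorname{poly}(d)\epsilon^{d+2})$. Iterating \eqref{ine:RCD-error-expectation-A} from $\bm{u}^0=\bm{0}$ gives
\begin{equation*}
\mathbb{E}(\|\bm{u}-\bm{u}^{k}\|_A^2) \le \rho^{k}\,\|\bm{u}\|_A^2 = \rho^k |u_h|_1^2 \le C\operatorname{poly}(d)\,\rho^k \|f\|^2 .
\end{equation*}
The last inequality uses the same chain as in the CG analysis of \Cref{subsec:CFEM-complexity}: $|u_h|_1\le |u|_1+|u-u_h|_1 \le C\operatorname{poly}(d)\|f\|$ under full regularity.

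Next, apply Markov's inequality to the nonnegative random variable $\|\bm{u}-\bm{u}^k\|_A^2$:
\begin{equation*}
\mathbb{P}\bigl(\|\bm{u}-\bm{u}^k\|_A^2 \ge 100\,\mathbb{E}(\|\bm{u}-\bm{u}^k\|_A^2)\bigr)\le 0.01 .
\end{equation*}
Hence, with probability at least $0.99$, $\|\bm{u}-\bm{u}^k\|_A/\|f\| \le 10\sqrt{C\operatorname{poly}(d)}\,\rho^{k/2}$. To meet \eqref{eqn:lin_solve_error} it suffices that $\rho^{k/2}\le \epsilon$. Using $\log(1/\rho)\ge 1-\rho$, this holds once
\begin{equation*}
k \ge \frac{2\log(1/\epsilon)}{1-\rho} = \mathcal{O}\bigl(\operatorname{poly}(d)\,\epsilon^{-(d+2)}\log(\epsilon^{-1})\bigr).
\end{equation*}
The constant $100$ from Markov only contributes an additive $\mathcal{O}(1/(1-\rho))$ term, which is absorbed. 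The discretization error with $h=\Theta(\epsilon)$ from \eqref{h-epsilon} then gives the total error bound through the triangle inequality. The statement's $L^2$ error follows from the $H^1$ seminorm error by the Poincar\'e inequality, up to $\operatorname{poly}(d)$ factors.

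Finally, count the work per iteration. Each RCD step needs three things. First, draw $i_k$ with $p_i = A(i,i)/\operatorname{tr}(A)$. Since $A(i,i)=\Theta(dh^{d-2})$ is essentially uniform on a quasi-uniform mesh, this can be done in $\mathcal{O}(\operatorname{poly}(d))$ time, for example by rejection sampling against the uniform distribution or with a precomputed alias table. Second, compute the row product $A(i_k,:)\bm{u}^k$, which involves $s=\Theta(d)$ nonzeros. Third, update a single entry. So each iteration costs $\mathcal{O}(\operatorname{poly}(d))$, and the total is $\mathcal{O}(\operatorname{poly}(d)\epsilon^{-(d+2)}\log(\epsilon^{-1}))$.

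The main obstacle is the bookkeeping rather than any deep step. There are two points to handle. One is to ensure the sampling from $p_i$ really costs only $\operatorname{poly}(d)$ per step, with no $\mathcal{O}(N)$ preprocessing that would dominate; the plan is to rely on quasi-uniformity, so rejection sampling from the uniform distribution has constant acceptance probability. The other is to track that every hidden constant, from Poincar\'e, regularity, and the $\lambda_{\min}$ and trace estimates of \Cref{rem:RCD-expend-importance-sampling-A}, is only $\operatorname{poly}(d)$.
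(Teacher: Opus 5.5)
Your proposal is correct and follows essentially the same route as the paper. Both arguments iterate the contraction of \Cref{thm:RCD-error-expectation-A} from $\bm{u}^0=\bm{0}$, bound $\|\bm{u}\|_A^2$ by $C\operatorname{poly}(d)\|f\|^2$ as in the CG analysis, apply Markov's inequality with the factor $100$, read off $k=\mathcal{O}(\operatorname{tr}(A)/\lambda_{\min}(A)\,\log\epsilon^{-1})$, and multiply by the per-step cost. Your extra bookkeeping (the $\log(1/\rho)\ge 1-\rho$ step, the $L^2$-versus-$H^1$ reduction via Poincar\'e, and the cost of sampling from $p_i$) only makes explicit what the paper leaves implicit. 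One small point: even an $\mathcal{O}(N)=\mathcal{O}(\epsilon^{-d})$ alias-table preprocessing is dominated by the $\epsilon^{-(d+2)}$ iteration count, so that concern is moot.
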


\begin{proof}
Since we choose $\bm{u}^0 = \bm{0}$, based on \Cref{thm:RCD-error-expectation-A}, we have
\begin{equation*}
	\mathbb{E}(\| \bm{u} - \bm{u}^k \|^2_A) \leq \left( 1 - \frac{\lambda_{\min}(A)}{\operatorname{tr}(A)}  \right)^k \| \bm{u} \|_A^2.
\end{equation*}
By Markov's inequality, i.e., $\mathbb{P}(X \geq a) \leq \frac{\mathbb{E}(X)}{a}$, there holds
\begin{equation*}
	\mathbb{P}\left(\| \bm{u} - \bm{u}^k \|^2_{A} \geq 100 \mathbb{E}(\| \bm{u} - \bm{u}^k \|^2_{A}) \right)	\leq \frac{1}{100}.
\end{equation*}
This means that, with high probability $0.99$, we obtain
\begin{equation*}
	\| \bm{u} - \bm{u}^k \|^2_{A} \leq 100 \, \mathbb{E}(\| \bm{u} - \bm{u}^k \|^2_{A}) \leq 100 \left( 1 - \frac{\lambda_{\min}(A)}{\operatorname{tr}(A)}  \right)^k \| \bm{u} \|_A^2.
\end{equation*}
Similarly to the convergence analysis of CG in \Cref{subsec:CFEM-complexity}, the above inequality becomes 
\begin{equation*}
	\| \bm{u} - \bm{u}^k \|^2_{A} \leq C \operatorname{poly}(d) \left( 1 - \frac{\lambda_{\min}(A)}{\operatorname{tr}(A)}  \right)^k \| \bm{u} \|_{2}^2 \leq C \operatorname{poly}(d) \left( 1 - \frac{\lambda_{\min}(A)}{\operatorname{tr}(A)}  \right)^k \| f \|^2 ,
\end{equation*}
and, thus, to achieve the $\epsilon$-accuracy,  the number of iterations for RCD should be
\begin{equation*}
	k = \mathcal{O}(\frac{\operatorname{tr}(A)}{\lambda_{\min}(A)} \log \frac{1}{\epsilon}).
\end{equation*}
Inserting the values of the smallest eigenvalue \eqref{eigmin_bound} and the trace of the matrix \eqref{trace_bound}, we obtain a bound of
\begin{equation*}
	k = \mathcal{O}( \operatorname{poly}(d) N^{1+\frac{2}{d}} \log \frac{1}{\epsilon}).
\end{equation*}
Finally, since each step of RCD costs $\Theta(d)$, the overall complexity of the quantum-inspired FEM is, with high probability $0.99$, 
\begin{equation}
	\mathcal{O}(dk) = \mathcal{O}(d \operatorname{poly}(d) N^{1+\frac{2}{d}} \log \frac{1}{\epsilon}) = \mathcal{O}( \operatorname{poly}(d)\epsilon^{-(d+2)} \log \frac{1}{\epsilon}).
\end{equation}
This completes the proof.
\end{proof}

The result from \cref{thm-orig-upper} implies that the complexity of the FEM with RCD (one type of quantum-inspired FEM) still depends on the dimension $d$ exponentially and still suffers from the ``\textit{curse of dimensionality}''.

\subsubsection{A Complexity Lower Bound}\label{sec:original-lower-bound}
The upper bound provided in the previous section implies that, in the worst-case scenario, the complexity depends exponentially on $d$. In this section, we show that to achieve $\epsilon$-accuracy, the required computational cost must also depend exponentially on $d$. To this end, we provide the following convergence lower bound of RCD. 
%following the idea proposed in \cite[Theorem 3.1]{2025Provable}.
\begin{lemma}
\label{thm-lower-bound-origin}
	When $\displaystyle p_i = \frac{A(i,i)}{\operatorname{tr}(A)}$ and $\omega_{i} = 1$, the RCD method applied on the original linear system \eqref{eqn:Ax=b} has the following lower bound estimate in expectation,
	\begin{align} \label{ine:RCD-error-expectation-A-lower}
		\mathbb{E}(\| \bm{u} - \bm{u}^{k+1} \|_A^2)  \geq  \left(  1 - \frac{\lambda_{\max}(A)}{\operatorname{tr}(A)} \right)  \mathbb{E}( \| \bm{u} - \bm{u}^k \|_A^2).
	\end{align}
\end{lemma}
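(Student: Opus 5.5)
We compute the one-step conditional expectation exactly and then bound it from below using the largest eigenvalue. Write $\bm{e}^k = \bm{u} - \bm{u}^k$ and $\bm{r}^k = A\bm{e}^k = \bm{f} - A\bm{u}^k$. With $\omega_i = 1$, the update in \cref{alg:RCD-A} reads
\[
\bm{e}^{k+1} = \bm{e}^k - \frac{\bm{r}^k(i)}{A(i,i)}\bm{e}_i
\]
when coordinate $i$ is selected.

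First, expanding the energy norm gives
\begin{align*}
\|\bm{e}^{k+1}\|_A^2 &= \|\bm{e}^k\|_A^2 - 2\frac{\bm{r}^k(i)}{A(i,i)}\,\bm{e}_i^T A\bm{e}^k + \frac{\bm{r}^k(i)^2}{A(i,i)^2}A(i,i) \\
&= \|\bm{e}^k\|_A^2 - \frac{\bm{r}^k(i)^2}{A(i,i)}.
\end{align*}
This is an equality, the same identity that underlies \Cref{thm:RCD-error-expectation-A}.

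Next, we take the conditional expectation over $i$ with $p_i = A(i,i)/\operatorname{tr}(A)$. The diagonal entries cancel:
\[
\mathbb{E}\bigl(\|\bm{e}^{k+1}\|_A^2 \,\big|\, \bm{u}^k\bigr) = \|\bm{e}^k\|_A^2 - \frac{1}{\operatorname{tr}(A)}\sum_{i=1}^N \bm{r}^k(i)^2 = \|\bm{e}^k\|_A^2 - \frac{\|A\bm{e}^k\|^2}{\operatorname{tr}(A)}.
\]

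The remaining step is the reverse of the Rayleigh-quotient bound used for the upper estimate. We need an upper bound on $\|A\bm{e}\|^2$ in terms of $\|\bm{e}\|_A^2$. Write $\|A\bm{e}\|^2 = (A^{1/2}\bm{e})^T A (A^{1/2}\bm{e})$. Since $A$ is symmetric positive semidefinite, this is at most
\[
\lambda_{\max}(A)\,\|A^{1/2}\bm{e}\|^2 = \lambda_{\max}(A)\,\|\bm{e}\|_A^2.
\]
Hence
\[
\mathbb{E}\bigl(\|\bm{e}^{k+1}\|_A^2 \,\big|\, \bm{u}^k\bigr) \geq \Bigl(1 - \frac{\lambda_{\max}(A)}{\operatorname{tr}(A)}\Bigr)\|\bm{e}^k\|_A^2.
\]

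Finally, taking the total expectation via the tower property gives \eqref{ine:RCD-error-expectation-A-lower}.

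There is no real obstacle. The only care needed is that the one-step identity is exact rather than an inequality. It is also worth noting that $\lambda_{\max}(A) \le \operatorname{tr}(A)$, so the factor is nonnegative and the bound is meaningful.
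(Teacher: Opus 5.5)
Your proof is correct and follows essentially the same route as the paper. Both use the exact one-step identity $\|\bm{u}-\bm{u}^{k+1}\|_A^2 = \|\bm{u}-\bm{u}^k\|_A^2 - r^k(i)^2/A(i,i)$, average it with $p_i = A(i,i)/\operatorname{tr}(A)$ to get $\big((I - A/\operatorname{tr}(A))(\bm{u}-\bm{u}^k), \bm{u}-\bm{u}^k\big)_A$, and then bound this from below using $\lambda_{\max}(A)$; your explicit $A^{1/2}$ justification of that last step fills in a detail the paper leaves implicit.
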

\begin{proof}
	For the RCD method, by direct calculation,
%	we rewrite the step 4 of \cref{eqn:RCD-A} as follows,
%	\begin{align}\label{eq-numer}
%		\bm{u}^{k+1} =  \bm{u}^k + \omega_{i_k} \frac{\bm{e}_{i_k}\bm{e}_{i_k}^T(\bm{b} - A\bm{x}^k)}{A(i_k, i_k)} . 
%	\end{align}
%	Assume $\bm{x}$ is the exact solution satisfying
%	\begin{align}\label{eq-exact}
%		\bm{x} = \bm{x} + \omega_{i_k} \frac{\bm{e}_{i_k}\bm{e}_{i_k}^T(\bm{b} - A\bm{x})}{A(i_k, i_k)}.
%	\end{align}
%	Subtract \eqref{eq-exact} from \eqref{eq-numer}, 
	\begin{align*}
		\| \bm{u} - \bm{u}^{k+1} \|_A^2 
		%& =  (\left(I- \omega_{i_k} \frac{\bm{e}_{i_k}\bm{e}_{i_k}^T A}{A(i_k, i_k)}\right)(\bm{u} - \bm{u}^{k}), \left(I- \omega_{i_k} \frac{\bm{e}_{i_k}\bm{e}_{i_k}^T A}{A(i_k, i_k)}\right)(\bm{u} - \bm{u}^{k}) )_A \\
 	%& = (\left(I- \omega_{i_k} \frac{\bm{e}_{i_k}\bm{e}_{i_k}^T A}{A(i_k, i_k)}\right)^{T}\left(I- \omega_{i_k} \frac{\bm{e}_{i_k}\bm{e}_{i_k}^T A}{A(i_k, i_k)}\right)(\bm{u} - \bm{u}^{k}), \bm{u} - \bm{u}^{k} )_A \\
		& = ( \left(  I - \frac{\omega_{i_k} (2- \omega_{i_k})}{A(i_k, i_k)} \bm{e}_{i_k} \bm{e}_{i_k}^T A \right)(\bm{u} - \bm{u}^k), \bm{u} - \bm{u}^k )_A.
	\end{align*}
	Taking the conditional expectation of the error,
	\begin{align}\label{eq-expect}
		\mathbb{E}(\| \bm{u} - \bm{u}^{k+1} \|_A^2 \vert \bm{u}^k ) &= \sum_{i=1}^N p_i ( \left(  I - \frac{\omega_{i} (2- \omega_{i})}{A(i, i)} \bm{e}_{i} \bm{e}_{i}^T A \right)(\bm{u} - \bm{u}^k), \bm{u} - \bm{u}^k )_A \\
		& = (  \left( I - \sum_{i=1}^N \frac{p_i\omega_{i} (2- \omega_{i})}{A(i, i)} \bm{e}_{i} \bm{e}_{i}^T A  \right) (\bm{u} - \bm{u}^k), \bm{u} - \bm{u}^k )_A. \nonumber
	\end{align}
	Using the fact that $\displaystyle p_i = \frac{A(i,i)}{\operatorname{tr}(A)}$ and $\omega_{i} = 1$, it holds that 
%	\begin{align*}
%		I - \sum_{i=1}^N \frac{p_i\omega_{i} (2- \omega_{i})}{A(i, i)} \bm{e}_{i} \bm{e}_{i}^T A & = I - \frac{A}{\operatorname{tr}(A)} = I - \frac{2}{\operatorname{tr}(A)}A + \frac{1}{\operatorname{tr}(A)}A \\
%		%& = I - \frac{2}{\operatorname{tr}(A)}A + \Big(\frac{1}{\operatorname{tr}(A)}A\Big)^2 + \frac{1}{\operatorname{tr}(A)}A - \Big(\frac{1}{\operatorname{tr}(A)}A\Big)^2 \\
%		& = \Big( I - \frac{1}{\operatorname{tr}(A)}A\Big)^2 + \left( \frac{1}{\operatorname{tr}(A)}A - \Big(\frac{1}{\operatorname{tr}(A)}A\Big)^2 \right) \\
%		& \succeq \Big( I - \frac{1}{\operatorname{tr}(A)}A\Big)^2.
%	\end{align*}
%	Here the notation $A \succeq B$ means that $A-B$ is SPD and the last inequality holds since the matrix $\displaystyle \frac{1}{\operatorname{tr}(A)}A - \Big(\frac{1}{\operatorname{tr}(A)}A\Big)^2$ is SPD, which is due to the fact that 
%	\begin{align*}
%		(\left(\frac{1}{\operatorname{tr}(A)}A - \Big(\frac{1}{\operatorname{tr}(A)}A\Big)^2\right)\omega, \omega)_A & = \frac{1}{\operatorname{tr}(A)}(\left(I - \frac{A}{\operatorname{tr}(A)}\right)A\omega, A\omega) \\
%		& \geq \frac{1}{\operatorname{tr}(A)}\left(1 - \frac{\lambda_{\max}(A)}{\operatorname{tr}(A)}\right)\|A\omega\|^2 > 0, \quad \forall\,\omega\in\mathbb{R}^n. 
%	\end{align*}
%	Combing \eqref{eq-expect} with the above equations yields
	\begin{align*}
		\mathbb{E}(\| \bm{u} - \bm{u}^{k+1} \|_A^2 \vert \bm{u}^k ) & = (\Big( I - \frac{1}{\operatorname{tr}(A)}A\Big)(\bm{u}-\bm{u}^k), \bm{u} - \bm{u}^k)_A \\
	& \geq \Big( 1 - \frac{\lambda_{\max}(A)}{\operatorname{tr}(A)}\Big)\|\bm{u}-\bm{u}^k)\|_A^2,
	\end{align*}
	which leads to the lower bound in expectation \eqref{ine:RCD-error-expectation-A-lower}.
\end{proof}

Based on the convergence lower bound of RCD in \Cref{thm-lower-bound-origin}, we further obtain the corresponding lower bound of computational complexity, stated in the following theorem.

\begin{theorem}\label{thm-orig-lower}
	Under the assumption of \Cref{thm-orig-upper}, the lower bound of overall computational complexity for the RCD method applied above is $\displaystyle \Omega (\operatorname{poly}(d)  \epsilon^{-d}\log(\epsilon^{-1}))$ in expectation.
\end{theorem}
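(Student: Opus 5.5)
We will iterate the one-step lower bound of \Cref{thm-lower-bound-origin}, which gives
\begin{equation*}
	\mathbb{E}(\| \bm{u} - \bm{u}^{k} \|_A^2) \geq \left( 1 - \frac{\lambda_{\max}(A)}{\operatorname{tr}(A)} \right)^k \| \bm{u} \|_A^2 ,
\end{equation*}
using $\bm{u}^0=\bm{0}$ and the tower property of conditional expectation. If the method is to reach the target accuracy \eqref{eqn:lin_solve_error} in expectation, then necessarily $\left(1 - \lambda_{\max}(A)/\operatorname{tr}(A)\right)^k \|\bm{u}\|_A^2 \le C\operatorname{poly}(d)\epsilon^2\|f\|^2$. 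Taking logarithms and using $-\log(1-x)\le 2x$ for small $x$ (we will check that $\lambda_{\max}/\operatorname{tr}$ is small), we obtain
\begin{equation*}
	k \geq \frac{\operatorname{tr}(A)}{2\lambda_{\max}(A)} \log\frac{\|\bm{u}\|_A^2}{C\operatorname{poly}(d)\epsilon^2\|f\|^2}.
\end{equation*}

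Next we insert the spectral estimates. From \eqref{trace_bound} we have $\operatorname{tr}(A)=\Theta(dh^{-2})$. For $\lambda_{\max}(A)$ we use the standard FEM bound $\lambda_{\max}(A)=\Theta(\operatorname{poly}(d)h^{d-2})$. This follows either from Gershgorin with $A(i,i)=\Theta(dh^{d-2})$ and $\Theta(d)$ nonzeros per row, or from $\lambda_{\max}=\kappa(A)\lambda_{\min}$ with \eqref{eigmin_bound} and $\kappa(A)=\Theta(\operatorname{poly}(d)h^{-2})$. Hence
\begin{equation*}
	\frac{\operatorname{tr}(A)}{\lambda_{\max}(A)}=\Theta(\operatorname{poly}(d)h^{-d})=\Theta(\operatorname{poly}(d)N)=\Theta(\operatorname{poly}(d)\epsilon^{-d}),
\end{equation*}
using \eqref{eqn:estimate-N}. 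In particular the ratio $\lambda_{\max}/\operatorname{tr}$ is $\mathcal{O}(\epsilon^d)$, which justifies the logarithm estimate above. Since every RCD step costs at least $\Theta(1)$, and in fact $\Theta(d)$, the total cost is $\Omega(\operatorname{poly}(d)\epsilon^{-d}\log(\epsilon^{-1}))$.

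The main obstacle is the logarithmic factor. To get $\log(1/\epsilon)$ rather than a constant, we need a lower bound $\|\bm{u}\|_A = |u_h|_1 \geq c\|f\|$ with $c$ independent of $\epsilon$ (up to $\operatorname{poly}(d)$). Then the argument of the logarithm is $\gtrsim \epsilon^{-2}/\operatorname{poly}(d)$. We would argue this for a fixed nontrivial $f$ as $h\to0$: $|u_h|_1\to|u|_1>0$, since $|u-u_h|_1\le C\operatorname{poly}(d)h\|f\|$ and $|u|_1\ge c\|f\|$ for generic data. Alternatively, the lower bound can be understood as worst case over $f$, e.g.\ $f$ a low-frequency eigenfunction, for which $|u|_1^2=\|f\|^2/\lambda_1(\mathcal{D})$ with $\lambda_1(\mathcal{D})$ the first Laplace eigenvalue. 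With this in hand, the remaining steps are the routine bookkeeping above.
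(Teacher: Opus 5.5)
Your proposal is correct and follows essentially the same route as the paper: iterate \Cref{thm-lower-bound-origin} from $\bm{u}^0=\bm{0}$, turn the contraction factor into an iteration count of order $\frac{\operatorname{tr}(A)}{\lambda_{\max}(A)}\log\frac{1}{\epsilon}$ (your bound $-\log(1-x)\le 2x$ plays the role of the paper's $(1-x)^k\ge \exp(-\frac{kx}{1-x})$), insert $\operatorname{tr}(A)=\Theta(dh^{-2})$ and $\lambda_{\max}(A)=\Theta(\operatorname{poly}(d)h^{d-2})$, and multiply by the $\Theta(d)$ cost per step. Your explicit requirement that $\|\bm{u}\|_A\ge c\|f\|$ (e.g.\ for a low-frequency $f$) to justify the $\log(\epsilon^{-1})$ factor is something the paper leaves implicit, so it refines the paper's argument rather than departing from it.
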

\begin{proof}
 The lower bound result \eqref{ine:RCD-error-expectation-A-lower} implies that, if $\bm{u}^0 = \bm{0}$, we have
\begin{align*}
	\mathbb{E}(\| \bm{u} - \bm{u}^{k} \|_A^2)  \geq  \left(  1 - \frac{\lambda_{\max}(A)}{\operatorname{tr}(A)} \right)^{k} \| \bm{u}  \|_A^2. 
\end{align*}
Utilizing the inequality $\displaystyle (1-x)^k \geq \operatorname{exp}(-\frac{kx}{1-x})$, $0<x<1$, gives the complexity 
% \textcolor{red}{
% \begin{align*}
%     (1-\frac{\lambda_{\min}(A)}{\operatorname{tr}(A)})^k & \geq \exp\left(-\frac{k\frac{\lambda_{\min}(A)}{\operatorname{tr}(A)}}{1-\frac{\lambda_{\min}(A)}{\operatorname{tr}(A)}}\right) \\
%     & = \exp\left(-\frac{k \lambda_{\min}(A)}{\operatorname{tr}(A) - \lambda_{\min}(A)}\right)  \geq \epsilon,
% \end{align*}
% then, taking $\log$ on both sides leads to
% \begin{align*}
%     & -\frac{k \lambda_{\min}(A)}{\operatorname{tr}(A) - \lambda_{\min}(A)} \geq \log\epsilon \Rightarrow 
%     \frac{k \lambda_{\min}(A)}{\operatorname{tr}(A) - \lambda_{\min}(A)} \leq \log\frac{1}{\epsilon} \\
%     & \Rightarrow k \leq \frac{\operatorname{tr}(A) - \lambda_{\min}(A)}{\lambda_{\min}(A)} \log \frac{1}{\epsilon}.
% \end{align*}
% }
% after 
\begin{align*}
k = \Omega(\frac{\operatorname{tr}(A) - \lambda_{\max}(A)}{\lambda_{\max}(A)} \log \frac{1}{\epsilon}).
\end{align*}
In other words, even after $k$ iterations, the relative error in expectation is still greater than $\epsilon$.  Since each step of RCD costs $\Theta(d)$,  $\lambda_{\max}(A) = \Theta(\operatorname{poly}(d) h^{d-2})$, and $\operatorname{tr}(A) = \Theta(d h^{-2})$,  the lower bound of complexity is 
\begin{align}\label{eqn:lower-bound}
\Omega(\operatorname{poly}(d) (\epsilon^{-d}-1) \log \frac{1}{\epsilon}),
\end{align}
which completes the proof.
%still depends on $d$ exponentially. 
\end{proof}

In summary, \cref{thm-orig-lower} shows that, even with computational complexity that is exponential in $d$, the relative error in expectation is still greater than $\epsilon$, which implies that the ``\textit{curse of dimensionality}'' is unavoidable.

\subsection{Expanded (Preconditioned) System}\label{sec:expaned}
%In order to take advantage of existing quantum-inspired algorithms, we first focus on reducing the condition number of $A$.  A well-known strategy is using a preconditioner $B$ such that $\kappa(BA) \ll \kappa(A)$ and, idealy, $\kappa(AB) = \mathcal{O}(1)$.  
As discussed in \Cref{sec:preliminaries}, introducing a preconditioner can improve the overall performance of both the classical FEM and the quantum FEM. In this section, we therefore explore the use of a preconditioner in combination with RCD.  

Since our focus is on the Poisson equation, a natural choice is the GMG method, which is a well-known optimal preconditioner in this case.  However, since MG defines the preconditioner $B$ implicitly, we cannot just explicitly form $B$, which is dense, and use it to construct a preconditioned linear system $BA \bm{u} = B \bm{f}$ directly, especially for the purpose of applying RCD.  To address this issue, we adopt an equivalent representation of the MG method to achieve our goal, i.e., the so-called \emph{expanded linear system} \cite{griebel1994multilevel}.   Let $P_{\ell} \in \mathbb{R}^{N \times N_{\ell}}$ be the composite prolongation operator from the $\ell$-th coarse level to level $1$ (finest level) used in a $L$-level MG method, where $N = N_1$.  We then define the operator $\Pi$ as follows
\begin{equation*}
	\Pi = [I, P_2, P_3, \cdots, P_L]. 
\end{equation*}
Then, the expanded system is defined as
\begin{equation} \label{eqn:expanded-system} 
	\widetilde{A} \widetilde{\bm{u}} = \widetilde{\bm{f}},
\end{equation}
where $\widetilde{A} = \Pi^T A \Pi \in \mathbb{R}^{\widetilde{N} \times \widetilde{N}}$ 
% $$
% \begin{pmatrix}
	%  A_1 & A_1 P_1 & \cdots A_1 P_L \\
	%  P^T_1 A_1 & A_2 & P^T_1 A_1 P_1 & \cdots & \\
	% \end{pmatrix} \\
% $$
and $\widetilde{\bm{f}} = \Pi^T \bm{f} \in \mathbb{R}^{\widetilde{N}}$ with $\widetilde{N} = \sum_{\ell=1}^L N_{\ell}$. It is straightforward to verify that we can recover $\bm{u}$ from $\widetilde{\bm{u}}$ via $\bm{u} = \Pi \widetilde{\bm{u}}$.   Therefore, instead of solving~\eqref{eqn:Ax=b}, we can equivalently solve~\eqref{eqn:expanded-system}.  
%In fact, applying one step of a MG V-cycle on~\eqref{eqn:Ax=b} is equivalent to one step of Gauss-Seidel on~\eqref{eqn:expanded-system} under certain conditions.  
The expanded system~\eqref{eqn:expanded-system} gives an alternative representation of the MG preconditioned system $BA \bm{u} = B\bm{f}$, while maintaining the ``preconditioning" effect.  Additionally, although $\widetilde{A}$ is symmetric semi-definite, we can fully characterize its null space.  Its effective condition number with Jacobi preconditioner (excluding the null space) satisfies $\kappa(\widetilde{D}^{-1}\widetilde{A}) = \kappa(BA)$ where $\widetilde{D} = \operatorname{diag}(\widetilde{A})$ and $B$ is a variant of the BPX preconditioner \cite{bramble1990parallel}.  Thus, for our Laplace example, $\kappa(\widetilde{D}^{-1}\widetilde{A}) = \Theta(\operatorname{poly}(d))$ \cite{griebel2013multilevel, bramble1990parallel, jiang2025polynomial}.

%Therefore, we propose to use the expanded system for a general class of second-order PDEs.  In the finite element setting, we can build~\eqref{eqn:expanded-system} directly and then apply some existing quantum-inspired algorithm.  Since $\widetilde{A}$ is in general sparse since it is obtained by discretize PDEs, we plan to focus on the second type quantum-inspired algorithms that is based on randomized iterative methods.  For the Laplace example we discuss here,

Note that when we apply RCD to the expanded system \eqref{eqn:expanded-system}, the iteration step (Line 4) in \cref{alg:RCD-A} becomes
	\begin{equation} \label{eqn:RCD}
		\widetilde{\bm{u}}^{k+1} = \widetilde{\bm{u}}^k + \omega_{i_k} \, \frac{\widetilde{\bm{f}}(i_k) - \widetilde{A}(i_k, :) \widetilde{\bm{u}}^k }{\widetilde{A}(i_k, i_k)} \bm{e}_{i_k}. %\quad \text{where the row $r_k$ is chosen with probability $\frac{\widetilde{A}(r_k, r_k)}{\operatorname{trace}(\widetilde{A})}$}.
	\end{equation}
	%Here $\widetilde{\bm{u}}^k$ denotes the $k$-th iterates and $\bm{e}_{i_k}$ is all zero vector except that the $i_k$-th entry is $1$.    $0< \omega^k <2$ is the a proper chosen weights and the row $i_k$ is chosen using the probability distribution $p_i$, $\sum_{i}^{N} p_i = 1$. We will specify our choices of $\omega^k$ and $p_i$ later based on the theoretical analysis.
	Since the expanded system \eqref{eqn:expanded-system} is positive and \emph{semi-definite}, the theoretical analysis of RCD is slightly different, though the final result is quite similar.  In addition, to relate RCD with the Jacobi-preconditioned expanded system $\widetilde{D}^{-1} \widetilde{A}$, motivated by \cite{hu2019randomized}, we simply choose the probability $p_i = 1/\widetilde{N}$.  We present the convergence analysis of applying RCD to the expanded system~\eqref{eqn:expanded-system} in the following theorem. 
	
	\begin{theorem} \label{thm:RCD-expanded-cnvergence}
		%If $\omega_i$ and $p_i$	 are chosen such that $\frac{p_i\omega_{i}(2-\omega_{i})}{\widetilde{A}(i,i)} = \frac{1}{\lambda}$, 
		If $\displaystyle p_i = 1/\widetilde{N}$
		% \frac{\widetilde{A}(i,i)}{\operatorname{tr}(\widetilde{A})}$ 
		and $\omega_{i} = 1$, 
		then the RCD applied on the expanded system~\eqref{eqn:expanded-system} has the following error decay in expectation for the original linear system~\eqref{eqn:Ax=b},
		\begin{equation} \label{ine:RCD-error-expectation}
			\mathbb{E}(\| \bm{u} - \bm{u}^{k+1} \|_A^2)  \leq \left(  1 -   \frac{\lambda_{\min}(\widetilde{D}^{-1}\widetilde{A})}{\widetilde{N}} \right) \mathbb{E}( \| \bm{u} - \bm{u}^k \|_A^2),
		\end{equation}
		where $\lambda_{\min}(M)$ denotes the smallest nonzero eigenvalue of a matrix $M$ if $M$ is positive semi-definite.  
	\end{theorem}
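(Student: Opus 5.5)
We track the error through $\bm{u}^k := \Pi\widetilde{\bm{u}}^k$, since $\bm{u}=\Pi\widetilde{\bm{u}}$ for any solution $\widetilde{\bm{u}}$ of \eqref{eqn:expanded-system}. Set $\widetilde{\bm{e}}^k = \widetilde{\bm{u}}-\widetilde{\bm{u}}^k$. Then
\[
\|\widetilde{\bm{e}}^k\|_{\widetilde{A}}^2 = (\Pi^TA\Pi\widetilde{\bm{e}}^k,\widetilde{\bm{e}}^k) = \|\Pi\widetilde{\bm{e}}^k\|_A^2 = \|\bm{u}-\bm{u}^k\|_A^2 .
\]
So it suffices to prove the contraction in the $\widetilde{A}$-seminorm. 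This quantity does not depend on which solution $\widetilde{\bm{u}}$ is chosen, because two solutions differ by an element of $\ker\widetilde{A}$. The system is consistent, since $\widetilde{\bm{f}}=\Pi^T\bm{f}=\Pi^TA\bm{u}=\widetilde{A}\widetilde{\bm{u}}$ for any $\widetilde{\bm{u}}$ with $\Pi\widetilde{\bm{u}}=\bm{u}$ (for instance $\widetilde{\bm{u}}=(\bm{u},0,\dots,0)$).

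Next we repeat the computation from the proof of \Cref{thm-lower-bound-origin} with $\widetilde{A}$ in place of $A$. One step of \eqref{eqn:RCD} with $\omega_i=1$ gives $\widetilde{\bm{e}}^{k+1} = (I-\widetilde{A}(i,i)^{-1}\bm{e}_i\bm{e}_i^T\widetilde{A})\widetilde{\bm{e}}^k$. Expanding,
\[
\|\widetilde{\bm{e}}^{k+1}\|_{\widetilde{A}}^2 = \|\widetilde{\bm{e}}^k\|_{\widetilde{A}}^2 - (\bm{e}_i^T\widetilde{A}\widetilde{\bm{e}}^k)^2/\widetilde{A}(i,i).
\]
This identity only needs $\widetilde{A}(i,i)>0$, which holds because every column of $\Pi$ is a nonzero finite-element function and $A$ is SPD. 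Taking the conditional expectation with $p_i=1/\widetilde{N}$ yields
\[
\mathbb{E}(\|\widetilde{\bm{e}}^{k+1}\|_{\widetilde{A}}^2\mid\widetilde{\bm{u}}^k) = \|\widetilde{\bm{e}}^k\|_{\widetilde{A}}^2 - \frac{1}{\widetilde{N}}(\widetilde{D}^{-1}\widetilde{A}\widetilde{\bm{e}}^k,\widetilde{A}\widetilde{\bm{e}}^k).
\]

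The main step, and the place where semi-definiteness matters, is the bound
\[
(\widetilde{D}^{-1}\widetilde{A}\bm{v},\widetilde{A}\bm{v}) \ge \lambda_{\min}(\widetilde{D}^{-1}\widetilde{A})\,(\widetilde{A}\bm{v},\bm{v}) \quad \text{for all } \bm{v}.
\]
To prove it, write $\bm{w}=\widetilde{D}^{1/2}\bm{v}$ and $M=\widetilde{D}^{-1/2}\widetilde{A}\widetilde{D}^{-1/2}$, which is symmetric positive semi-definite and has the same nonzero spectrum as $\widetilde{D}^{-1}\widetilde{A}$. The inequality becomes $(M^2\bm{w},\bm{w})\ge\lambda_{\min}(M)(M\bm{w},\bm{w})$. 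Expand $\bm{w}$ in an orthonormal eigenbasis of $M$. Components in $\ker M$ contribute zero to both sides, and every other component satisfies $\lambda_j^2\ge\lambda_{\min}\lambda_j$ with $\lambda_{\min}$ the smallest nonzero eigenvalue.

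Substituting $\bm{v}=\widetilde{\bm{e}}^k$ gives the one-step contraction with factor $1-\lambda_{\min}(\widetilde{D}^{-1}\widetilde{A})/\widetilde{N}$. Taking total expectation and using the identity from the first paragraph converts this into \eqref{ine:RCD-error-expectation}. One thing to check is that the factor is nonnegative. This follows from $\lambda_{\max}(\widetilde{D}^{-1}\widetilde{A})\le\widetilde{N}$, because the trace of $M$ equals $\widetilde{N}$.
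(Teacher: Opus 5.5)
Your proof is correct. Up to the formula $\mathbb{E}(\|\widetilde{\bm{e}}^{k+1}\|_{\widetilde{A}}^2\mid\widetilde{\bm{u}}^k)=\|\widetilde{\bm{e}}^k\|_{\widetilde{A}}^2-\widetilde{N}^{-1}(\widetilde{D}^{-1}\widetilde{A}\widetilde{\bm{e}}^k,\widetilde{A}\widetilde{\bm{e}}^k)$ it is the paper's computation. The two arguments part ways exactly at the step where semi-definiteness matters.

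The paper pulls the subtracted term back to the original space, rewriting it as $(\Pi\widetilde{D}^{-1}\Pi^TA(\bm{u}-\bm{u}^k),\bm{u}-\bm{u}^k)_A$. The operator $\Pi\widetilde{D}^{-1}\Pi^TA$ is $A$-self-adjoint and nonsingular, because $\Pi=[I,P_2,\dots,P_L]$ has full row rank. So the ordinary Rayleigh-quotient bound applies. Its smallest eigenvalue is then identified with the smallest nonzero eigenvalue of $\widetilde{D}^{-1}\widetilde{A}=\widetilde{D}^{-1}\Pi^TA\Pi$, since $XY$ and $YX$ (with $X=\Pi$, $Y=\widetilde{D}^{-1}\Pi^TA$) have the same nonzero spectrum. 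You instead stay in the expanded space, symmetrize with $\widetilde{D}^{1/2}$, and remove $\ker\widetilde{A}$ by hand through the eigen-expansion of $M$.

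The paper's route buys an interpretation. The rate is governed by $\lambda_{\min}(BA)$ for the BPX-type preconditioner $B=\Pi\widetilde{D}^{-1}\Pi^T$ acting on the original system, which is how the result connects to multigrid theory. Your route is more self-contained and slightly more general. The key inequality $(\widetilde{D}^{-1}\widetilde{A}\bm{v},\widetilde{A}\bm{v})\ge\lambda_{\min}(\widetilde{D}^{-1}\widetilde{A})(\widetilde{A}\bm{v},\bm{v})$ holds for any symmetric positive semi-definite matrix with positive diagonal, so it needs neither the full-row-rank property nor the spectral-transfer step. Your trace observation $\lambda_{\max}(\widetilde{D}^{-1}\widetilde{A})\le\operatorname{tr}(M)=\widetilde{N}$ is a useful extra that the paper never states. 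It also keeps the factor in the paper's later lower-bound theorem nonnegative. Finally, your version avoids the slip in the paper's displayed inequality, where $\operatorname{tr}(\widetilde{A})$ appears in place of $\widetilde{N}$.
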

    %\textcolor{blue}{JA: is that statement fully correct?}
	\begin{proof}
		%We first rewrite~\eqref{eqn:RCD} as follows,
		%\begin{equation*}
		%	\widetilde{\bm{x}}^{k+1} = \widetilde{\bm{x}}^k + \omega_{i_k} \frac{1}{\widetilde{A}(i_k, i_k)} \bm{e}_{i_k}\bm{e}_{i_k}^T \left( \widetilde{\bm{b}} - \widetilde{A} \widetilde{\bm{x}}^k  \right).
		%\end{equation*}
		%Note that the solution $\widetilde{\bm{x}}$ satisfies,
		%\begin{equation*}
		%	\widetilde{\bm{x}} = \widetilde{\bm{x}} + \omega_{i_k} \frac{1}{\widetilde{A}(i_k, i_k)} \bm{e}_{i_k}\bm{e}_{i_k}^T \left( \widetilde{\bm{b}} - \widetilde{A} \widetilde{\bm{x}} \right).
		%\end{equation*}
		By direct calculation, we have the following identity for the error $\widetilde{\bm{u}} - \widetilde{\bm{u}}^{k+1} $,
		\begin{equation*}
			\widetilde{\bm{u}} - \widetilde{\bm{u}}^{k+1} = \left(  I - \frac{\omega_{i_k}}{\widetilde{A}(i_k,i_k)}  \bm{e}_{i_k}\bm{e}_{i_k}^T \widetilde{A}  \right) ( \widetilde{\bm{u}} - \widetilde{\bm{u}}^{k}   ).
		\end{equation*}
		Using the relation between expanded system and the original system gives,
		\begin{align}\label{u-tilde-u}
			\| \bm{u} - \bm{u}^k \|_A^2 & = (A (\bm{u} - \bm{u}^k), (\bm{u} - \bm{u}^k)) = (A \Pi(\widetilde{\bm{u}} - \widetilde{\bm{u}}^k), \Pi (\widetilde{\bm{u}} - \widetilde{\bm{u}}^k) ) \\
            & = (\widetilde{A} (\widetilde{\bm{u}} - \widetilde{\bm{u}}^k), (\widetilde{\bm{u}} - \widetilde{\bm{u}}^k)). \nonumber
		\end{align}
		Then, combining with the above two equations, we obtain
		\begin{align*}
			& \quad \| \bm{u} - \bm{u}^{k+1} \|_A^2 
			%& = (\widetilde{A} (\widetilde{\bm{x}} - \widetilde{\bm{x}}^{k+1}), (\widetilde{\bm{x}} - \widetilde{\bm{x}}^{k+1}))  
			\\
			& =  (\widetilde{A}  \left(  I - \frac{\omega_{i_k}}{\widetilde{A}(i_k,i_k)}  \bm{e}_{i_k}\bm{e}_{i_k}^T \widetilde{A}  \right) ( \widetilde{\bm{u}} - \widetilde{\bm{u}}^{k}   ), \left(  I - \frac{\omega_{i_k}}{\widetilde{A}(i_k,i_k)}  \bm{e}_{i_k}\bm{e}_{i_k}^T \widetilde{A}  \right) ( \widetilde{\bm{u}} - \widetilde{\bm{u}}^{k}   )   ) \\
			& = ( \left(  I - \frac{\omega_{i_k}}{\widetilde{A}(i_k,i_k)}  \bm{e}_{i_k}\bm{e}_{i_k}^T \widetilde{A}  \right)^2 (\widetilde{\bm{u}} - \widetilde{\bm{u}}^{k}), \widetilde{A} (\widetilde{\bm{u}} - \widetilde{\bm{u}}^{k}) ) \\
			& = ( \left(  I - \frac{\omega_{i_k}(2-\omega_{i_k})}{\widetilde{A}(i_k,i_k)}  \bm{e}_{i_k}\bm{e}_{i_k}^T \widetilde{A}  \right) (\widetilde{\bm{u}} - \widetilde{\bm{u}}^{k}), \widetilde{A} (\widetilde{\bm{u}} - \widetilde{\bm{u}}^{k}) ).
		\end{align*}
		%Note that 
		%\begin{equation*}
		%	\left(  I - \frac{\omega_{i_k}}{\widetilde{A}(i_k,i_k)}  \bm{e}_{i_k}\bm{e}_{i_k}^T \widetilde{A}  \right)^2 = I - \frac{\omega_{i_k}(2-\omega_{i_k})}{\widetilde{A}(i_k,i_k)}\bm{e}_{i_k}\bm{e}_{i_k}^T \widetilde{A},
		%\end{equation*}
		%This leads to 
	%	\begin{align*}
		%	\| \bm{u} - \bm{u}^{k+1} \|_A^2 & = ( \left(  I - \frac{\omega_{i_k}(2-\omega_{i_k})}{\widetilde{A}(i_k,i_k)}  \bm{e}_{i_k}\bm{e}_{i_k}^T \widetilde{A}  \right) (\widetilde{\bm{u}} - \widetilde{\bm{u}}^{k}), \widetilde{A} (\widetilde{\bm{x}} - \widetilde{\bm{x}}^{k}) ).
		%\end{align*}
		Thus, the conditional expectation of the error becomes,
		\begin{align*}
			\mathbb{E}(\| \bm{u} - \bm{u}^{k+1} \|_A^2 \vert \bm{u}^k) & = \sum_{i=1}^{\widetilde{N}} p_i ( \left(  I - \frac{\omega_{i}(2-\omega_{i})}{\widetilde{A}(i,i)}  \bm{e}_{i}\bm{e}_{i}^T \widetilde{A}  \right) (\widetilde{\bm{u}} - \widetilde{\bm{u}}^{k}), \widetilde{A} (\widetilde{\bm{u}} - \widetilde{\bm{u}}^{k}) ) \\
			& = (  \left(  I - \sum_{i=1}^{\widetilde{N}} \frac{p_i\omega_{i}(2-\omega_{i})}{\widetilde{A}(i,i)}  \bm{e}_{i}\bm{e}_{i}^T \widetilde{A}    \right) (\widetilde{\bm{u}} - \widetilde{\bm{u}}^{k}), \widetilde{A} (\widetilde{\bm{u}} - \widetilde{\bm{u}}^{k} )).
		\end{align*}
		%Note that for the RCD  $\displaystyle \frac{p_i\omega_{i}(2-\omega_{i})}{\widetilde{A}(i,i)} = \frac{1}{\operatorname{tr}(\widetilde{A})}$, then
		Due to our choice of $p_i$ and $\omega_i$, the above calculation provides the following estimate
%		\begin{align*}
%			\mathbb{E}(\| \bm{u} - \bm{u}^{k+1} \|_A^2 \vert \bm{u}^k) & = (  \left(  I - \frac{1}{\operatorname{tr}(\widetilde{A})} \sum_{i=1}^N  \bm{e}_{i}\bm{e}_{i}^T \widetilde{A}    \right) (\widetilde{\bm{u}} - \widetilde{\bm{u}}^{k}), \widetilde{A} (\widetilde{\bm{u}} - \widetilde{\bm{u}}^{k} ))  \\
%			& = ((\widetilde{\bm{u}} - \widetilde{\bm{u}}^{k}), \widetilde{A} (\widetilde{\bm{u}} - \widetilde{\bm{u}}^{k} )) - \frac{1}{\operatorname{tr}(\widetilde{A})} (  \widetilde{A} (\widetilde{\bm{u}} - \widetilde{\bm{u}}^{k}), \widetilde{A} (\widetilde{\bm{u}} - \widetilde{\bm{u}}^{k} )) \\
%			& = (\bm{u} - \bm{u}^k, \bm{u} - \bm{u}^k)_A -  \frac{1}{\operatorname{tr}(\widetilde{A})}(\Pi \Pi^T A (\bm{u} - \bm{u}^k), \bm{u} - \bm{u}^k)_A  \\
%			& \leq \left(  1 -   \frac{\lambda_{\min}(\Pi \Pi^T A)}{\operatorname{tr}(\widetilde{A})} \right) \| \bm{u} - \bm{u}^k \|_A^2.
%		\end{align*}
		\begin{align*}
				\mathbb{E}(\| \bm{u} - \bm{u}^{k+1} \|_A^2 \vert \bm{u}^k) & = (  \left(  I - \frac{1}{\widetilde{N}} \sum_{i=1}^{\widetilde{N}} \frac{1}{\widetilde{A}(i,i)}  \bm{e}_{i}\bm{e}_{i}^T \widetilde{A}    \right) (\widetilde{\bm{u}} - \widetilde{\bm{u}}^{k}), \widetilde{A} (\widetilde{\bm{u}} - \widetilde{\bm{u}}^{k} ))  \\
				& = ((\widetilde{\bm{u}} - \widetilde{\bm{u}}^{k}), \widetilde{A} (\widetilde{\bm{u}} - \widetilde{\bm{u}}^{k} )) - \frac{1}{\widetilde{N}} (  \widetilde{D}^{-1} \widetilde{A} (\widetilde{\bm{u}} - \widetilde{\bm{u}}^{k}), \widetilde{A} (\widetilde{\bm{u}} - \widetilde{\bm{u}}^{k} )) \\
				& = (\bm{u} - \bm{u}^k, \bm{u} - \bm{u}^k)_A -  \frac{1}{\widetilde{N}}(\Pi \widetilde{D}^{-1} \Pi^T A (\bm{u} - \bm{u}^k), \bm{u} - \bm{u}^k)_A  \\
				& \leq \left(  1 -   \frac{\lambda_{\min}(\Pi \widetilde{D}^{-1} \Pi^T A)}{\operatorname{tr}(\widetilde{A})} \right) \| \bm{u} - \bm{u}^k \|_A^2.
			\end{align*}
	Noting that $\lambda_{\min}(\Pi\ \widetilde{D}^{-1}\Pi^T A) = \lambda_{\min}( \widetilde{D}^{-1} \Pi^T A \Pi) = \lambda_{\min}(\widetilde{D}^{-1} \widetilde{A})$, there holds %where  $\lambda_{\min}(\widetilde{A})$ denotes the smallest nonzero eigenvalue of $\widetilde{A}$, 
		\begin{equation*}
			\mathbb{E}(\| \bm{u} - \bm{u}^{k+1} \|_A^2 \vert \bm{u}^k)  \leq \left(  1 -   \frac{\lambda_{\min}(\widetilde{D}^{-1}\widetilde{A})}{\widetilde{N}} \right) \| \bm{u} - \bm{u}^k \|_A^2.
		\end{equation*}
		Taking another expectation, we obtain the estimate \eqref{ine:RCD-error-expectation}. 
		%\begin{equation*}
		%	\mathbb{E}(\| \bm{x} - \bm{x}^{k+1} \|_A^2)  \leq \left(  1 -   \frac{\lambda_{\min}(\widetilde{A})}{\lambda} \right) \mathbb{E}( \| \bm{x} - \bm{x}^k \|_A^2).
	%	\end{equation*}
		%Furthermore, if $\lambda = \lambda_{\max}(\widetilde{A})$, then we have
		%\begin{equation*}
		%	\mathbb{E}(\| \bm{x} - \bm{x}^{k+1} \|_A^2)  \leq \left(  1 -   \frac{\lambda_{\min}(\widetilde{A})}{\lambda_{\max}(\widetilde{A})} \right) \mathbb{E}( \| \bm{x} - \bm{x}^k \|_A^2) = \left( 1 - \frac{1}{\kappa(\widetilde{A})} \right) \mathbb{E}( \| \bm{x} - \bm{x}^k \|_A^2).
		%\end{equation*}
		%This completes the proof. 
	\end{proof}
	
	\begin{remark} \label{rem:RCD-expend-importance-sampling}
	%	Usually, we use $p_i = \frac{\widetilde{A}(i,i)}{\operatorname{tr}(\widetilde{A})}$ and $\omega_{i} = 1$ in RCD, see \cite{leventhal2010randomized}.  In this case, the convergence estimate becomes 
	%	\begin{equation*}
	%		\mathbb{E}(\| \bm{x} - \bm{x}^{k+1} \|_A^2)  \leq \left(  1 -   \frac{\lambda_{\min}(\widetilde{A})}{\operatorname{tr}(\widetilde{A})} \right) \mathbb{E}( \| \bm{x} - \bm{x}^k \|_A^2).
%		\end{equation*}
		In the GMG setting, based on \cite{griebel2013multilevel, jiang2025polynomial}, choosing $h = h_1 =\Theta(\epsilon)$ as before,  for the expanded matrix $\widetilde{A}$, we have  
%		\begin{flalign}\label{eigmin_bound-expand}
%      \lambda_{\min}(\widetilde{A})  = \Theta(dh^{d-2}) = \Theta(d \epsilon^{d-2}), \\
%			\operatorname{tr}(\widetilde{A}) = \sum_\ell \operatorname{tr}(A_\ell) = \Theta(\sum_{\ell} 2d N_{\ell} h_{\ell}^{d-2}) = \Theta(d \sum_{\ell} h_{\ell}^{-2}) = \Theta(d h^{-2}) =  \Theta(d \epsilon^{-2}).
%			%\\
%			%& = \mathcal{O}(dN h^{d-2}) = \mathcal{O}(dh^{-d} h^{d-2}) = \mathcal{O}(dh^{-2}) = \mathcal{O}(dN^{2/d}).
%		\end{flalign}
\begin{align}\label{eigmin_bound-expand}
	&\lambda_{\min}( \widetilde{D}^{-1}\widetilde{A})  = \Theta(\operatorname{poly}(d)), \\
&	\widetilde{N} =  \sum_{\ell=1}^L N_{\ell} = \Theta(N) = \Theta(\epsilon^{-d}).
\end{align}
		Then, substituting the above two equations into \eqref{ine:RCD-error-expectation} leads to
		\begin{equation*}
			\mathbb{E}(\| \bm{u} - \bm{u}^{k+1} \|_A^2)  \leq \left(  1 -   \Theta(\operatorname{poly}(d)\epsilon^{d}) \right) \mathbb{E}( \| \bm{u} - \bm{u}^k \|_A^2).
		\end{equation*}
	   Comparing with \Cref{rem:RCD-expend-importance-sampling-A}, we see the effect of the MG preconditioning, which allows us to reduce from $\epsilon^{d+2}$ to $\epsilon^d$.  However, as we discuss in the next section, such a convergence result leads to an upper bound of the computational complexity that still depends on the dimension $d$ exponentially. 
	\end{remark}

\subsubsection{A Complexity Upper Bound}
%Next we consider the RCD for the expanded system~\eqref{eqn:expanded-system}.  From Remark~\ref{rem:RCD-expend-importance-sampling}, using initial guess $\widetilde{\bm{u}}^0 = \bm{0}$, i.e. $\bm{u}^0 = \Pi \widetilde{\bm{u}}^0 = \bm{0}$,
%\begin{equation*}
%	\mathbb{E}(\| \bm{u} - \bm{u}^{k+1} \|_A^2)  \leq \left(  1 -   \frac{\lambda_{\min}(\widetilde{A})}{\operatorname{tr}(\widetilde{A})} \right)^k \| \bm{u} \|^2_A.
%\end{equation*}
Following a similar argument as the complexity upper bound for the original (unpreconditioned) system  (see \Cref{thm:RCD-expanded-cnvergence} in \Cref{sec:original-upper-bound}), for the expanded system, we have the following theorem about its complexity upper bound.

\begin{theorem}\label{thm-expanded-upper}
	For the preconditioned system \eqref{eqn:expanded-system}, arising from the\newline $d$-dimensional Poisson problem \eqref{eqn:Poisson}-\eqref{eqn:Poisson-NBC}, assume that $u$ and $u_h^k$ are the exact solution and finite-element solution via a quantum-inspired FEM, satisfying\newline $\|u-u_h^k\|/\|f\| = \mathcal{O}(\operatorname{poly}(d)\epsilon)$. The overall computational complexity is at most\newline $\displaystyle \mathcal{O}(\operatorname{poly}(d)\epsilon^{-2d}\log^2(\epsilon^{-1}))$ with high probability $0.99$.
\end{theorem}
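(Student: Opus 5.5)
We follow the proof of \Cref{thm-orig-upper}, replacing \Cref{thm:RCD-error-expectation-A} by \Cref{thm:RCD-expanded-cnvergence}, and take the initial guess $\widetilde{\bm{u}}^0=\bm{0}$, so that $\bm{u}^0=\Pi\widetilde{\bm{u}}^0=\bm{0}$. Iterating \eqref{ine:RCD-error-expectation} gives
\begin{equation*}
\mathbb{E}(\|\bm{u}-\bm{u}^k\|_A^2)\leq\Big(1-\frac{\lambda_{\min}(\widetilde{D}^{-1}\widetilde{A})}{\widetilde{N}}\Big)^k\|\bm{u}\|_A^2 .
\end{equation*}
By Markov's inequality, with probability at least $0.99$ we have $\|\bm{u}-\bm{u}^k\|_A^2\leq 100\,\mathbb{E}(\|\bm{u}-\bm{u}^k\|_A^2)$. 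As in \Cref{subsec:CFEM-complexity}, we bound $\|\bm{u}\|_A=|u_h|_1\leq C\operatorname{poly}(d)\|f\|$. This reduces the target \eqref{eqn:lin_solve_error} to making the contraction factor to the power $k$ at most $\epsilon^2$. Using $1-x\leq e^{-x}$, it suffices to take
\begin{equation*}
k=\mathcal{O}\Big(\frac{\widetilde{N}}{\lambda_{\min}(\widetilde{D}^{-1}\widetilde{A})}\log\frac1\epsilon\Big)=\mathcal{O}(\operatorname{poly}(d)\,\epsilon^{-d}\log\epsilon^{-1}),
\end{equation*}
where we insert \eqref{eigmin_bound-expand} from \Cref{rem:RCD-expend-importance-sampling}, namely $\widetilde{N}=\Theta(\epsilon^{-d})$ and $\lambda_{\min}(\widetilde{D}^{-1}\widetilde{A})=\Theta(\operatorname{poly}(d))$.

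The main work is bounding the cost of one RCD step on $\widetilde{A}$, and this is what produces the extra factor $\epsilon^{-d}\log\epsilon^{-1}$ in the statement. Unlike $A$, the expanded matrix $\widetilde{A}=\Pi^TA\Pi$ is not $\mathcal{O}(d)$-sparse. A row corresponding to a coarse-level basis function couples, through the prolongations $P_\ell$, to every fine- and coarse-level function whose support overlaps it. Such a row can have nonzero count comparable to $N$ on the coarsest levels. The plan is to bound the worst-case cost of computing $\widetilde{A}(i_k,:)\widetilde{\bm{u}}^k$ by evaluating it as $\bm{e}_{i_k}^T\Pi^T A(\Pi\widetilde{\bm{u}}^k)$. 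If we maintain $\bm{u}^k=\Pi\widetilde{\bm{u}}^k$ and the residual $\bm{f}-A\bm{u}^k$ on the fine grid, then one step costs the size of the fine-grid support of the chosen basis function. The cost to update the maintained residual is of the same order.

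Summed over levels, this support size is at most $\mathcal{O}(\operatorname{poly}(d)N)$ per step. A logarithmic factor $L=\Theta(\log h^{-1})=\Theta(\log\epsilon^{-1})$ from the number of levels can enter, for example when the prolongations are applied level by level. This gives a per-step cost of $\mathcal{O}(\operatorname{poly}(d)\,\epsilon^{-d}\log\epsilon^{-1})$. We do not try to average with respect to $p_i=1/\widetilde{N}$, which would give a smaller per-step cost. Instead we use this worst-case per-step bound, since the statement is a high-probability upper bound and this is the form it takes.

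Multiplying the iteration count by the per-step cost gives an overall complexity of $\mathcal{O}(\operatorname{poly}(d)\epsilon^{-2d}\log^2\epsilon^{-1})$ with probability $0.99$. The step I expect to be most delicate is the justification of the per-step cost. It requires showing that a single row of $\Pi^TA\Pi$ can be applied in $\mathcal{O}(\operatorname{poly}(d)N\log\epsilon^{-1})$ operations, using the nested structure of the $P_\ell$ (composite prolongation as a product of $\mathcal{O}(d)$-sparse two-level prolongations). The remaining steps are routine substitutions.
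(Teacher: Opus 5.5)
Your proof is correct. Its first half (zero initial guess, iterating \eqref{ine:RCD-error-expectation}, Markov's inequality, and $k=\mathcal{O}(\widetilde{N}\log(\epsilon^{-1})/\lambda_{\min}(\widetilde{D}^{-1}\widetilde{A}))$) coincides with the paper's. The per-step cost is where you take a genuinely different route.

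\emph{The paper's route.} It stays in the expanded space. Because $\widetilde{\bm{u}}^0=\bm{0}$ and each step changes one coordinate, $\widetilde{\bm{u}}^i$ has at most $i$ nonzeros. So step $i$ needs only $\mathcal{O}(i)$ entries of row $i_k$ of $\widetilde{A}$, however dense that row is. Summing gives $\mathcal{O}(k^2)$, and the $\log^2$ in the statement is just the square of the logarithm in $k$.

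\emph{Your route.} You carry $\bm{u}^k=\Pi\widetilde{\bm{u}}^k$ and $\bm{r}^k=\bm{f}-A\bm{u}^k$ on the fine grid. A step is then the inner product $(\Pi\bm{e}_{i_k})^T\bm{r}^k$ followed by local updates of $\bm{u}^k$ and $\bm{r}^k$. The diagonal entry $\widetilde{A}(i_k,i_k)=(\Pi\bm{e}_{i_k})^TA\,\Pi\bm{e}_{i_k}$, which you do not mention, costs the same, or is simply the Galerkin coarse-level diagonal.

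\emph{What each buys.} The paper's argument is shorter and uses only entry access to $\widetilde{A}$, in the spirit of the query model. However, it tacitly assumes each $\widetilde{A}(i,j)$ is available in $\mathcal{O}(1)$ time, and its per-step bound grows with the iteration count. Yours needs $\mathcal{O}(N)$ auxiliary storage and the explicit multilevel structure. In return, every operation is charged to the sparse matrices $A$ and $\Pi$, and the per-step bound is independent of $k$.

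\emph{Your "delicate" step.} This step is in fact routine. $\Pi\bm{e}_{i_k}$ is the fine-grid nodal vector of a single hat function, so it has at most $N$ nonzeros. Computing it level by level is a geometric sum dominated by the finest level, because each row of a two-level prolongation has at most two nonzeros; its columns, by contrast, need not be $\mathcal{O}(d)$-sparse. Updating $\bm{r}^k$ costs $\mathcal{O}(d)$ per nonzero. Hence no factor $L$ arises, and your route actually gives $\mathcal{O}(\operatorname{poly}(d)\epsilon^{-2d}\log\epsilon^{-1})$. The extra logarithm you insert is harmless padding, not something the argument produces.

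\emph{A further gain.} Your bookkeeping also shows what the averaging you decline would buy. Each fine node lies in the support of at most $d+1$ hat functions per level. So under $p_i=1/\widetilde{N}$ the expected per-step cost is only $\mathcal{O}(\operatorname{poly}(d)L)$. That would bring the expected total to within a logarithm of the lower bound in \Cref{thm-expanded-lower}.
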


\begin{proof}
Again, using initial guess $\widetilde{\bm{u}}^0 = \bm{0}$ (which implies $\bm{u}^0 = \bm{0}$) and Markov's inequality, with high probability $0.99$, we have
\begin{equation*}
	\| \bm{u} - \bm{u}^k \|^2_A \leq 
	%C \operatorname{poly}(d) \left(  1 -   \frac{\lambda_{\min}(\widetilde{D}^{-1}\widetilde{A})}{\widetilde{N}} \right)^k  \| \bm{u} \|^2_{2} \leq 
	C \operatorname{poly}(d) \left(  1 -   \frac{\lambda_{\min}(\widetilde{D}^{-1}\widetilde{A})}{\widetilde{N}} \right)^k  \| \bm{f} \|^2.
\end{equation*}
Hence, to achieve desired accuracy, we require the number of iterations to be at most
\begin{equation*}
	k = %\mathcal{O}(\frac{\operatorname{tr}(\widetilde{A})}{\lambda_{\min}(\widetilde{A})} \log \frac{1}{\epsilon}).
	\mathcal{O}(\frac{\widetilde{N}}{\lambda_{\min}(\widetilde{D}^{-1}\widetilde{A})} \log \frac{1}{\epsilon}).
\end{equation*}

Note, we need to be careful about the cost of each iteration of RCD for the expanded system because the sparsity of $\widetilde{A}$ is quite different from the sparsity of $A$. In fact, there are some rows of $\widetilde{A}$ that are dense, which implies, for a general iterator $\widetilde{\bm{u}}^k$, the cost of each step could be $\mathcal{O}(N)$. If we exactly access a dense row, the cost becomes $\Theta(N)$.
However, due to the choice $\widetilde{\bm{u}}^0 = \bm{0}$, at $i$-th iteration of RCD for the expanded system~\eqref{eqn:expanded-system}, $\widetilde{\bm{u}}^i$ is at most $i$-sparse, i.e., at most $i$ components of $\widetilde{\bm{u}}^i$ are nonzeros.  Therefore, the cost of the $i$-th step is at most $\mathcal{O}(i)$ and the overall cost of $k$ steps is
\begin{equation*}
	\mathcal{O}(1 + 2 + \cdots + k) = \mathcal{O}(k^2). 
\end{equation*}
Inserting the value of $k$ gives the overall complexity
\begin{equation*}
%	\mathcal{O}(k^2) = \mathcal{O}(  \frac{\operatorname{tr}^2(\widetilde{A})}{\lambda^2_{\min}(\widetilde{A})} \log^2 \frac{1}{\epsilon^2} ) = \mathcal{O}( \epsilon^{-2d} \log^2 \frac{1}{\epsilon} ),
	\mathcal{O}(k^2) = \mathcal{O}(  \frac{\widetilde{N}^2}{\lambda^2_{\min}(\widetilde{D}^{-1}\widetilde{A})} \log^2 \frac{1}{\epsilon^2} ) = \mathcal{O}( \operatorname{poly}(d) \epsilon^{-2d} \log^2 \frac{1}{\epsilon} ),
\end{equation*}
which follows from the fact that $\lambda_{\min}(\widetilde{D}^{-1}\widetilde{A}) = \Theta(\operatorname{poly}(d))$ and  $\widetilde{N} = \Theta(\epsilon^{-d})$.   This completes the proof.
\end{proof}

Again, \cref{thm-expanded-upper} shows that the complexity for the expanded system still depends on the dimension $d$ exponentially. 
%The conclusion is summarized in the below theorem. 
%\textcolor{red}{again, proof-theorem order is strange}

%\textcolor{red}{(Need to think if we need a remark here to discuss that the cost can be further reduced if we use GMG structure.  However, the dependence would still be exponential. --XH)}

\subsubsection{A Complexity Lower Bound}
Following the same idea about the lower bound analysis for the original system in \Cref{sec:original-lower-bound}, we present the lower bound estimate in expectation, and the complexity lower bound for the expanded system \eqref{eqn:expanded-system} in this subsection.

\begin{theorem}\label{thm-lower-bound-expand}
	When $\displaystyle p_i = 1/ \widetilde{N} $ %\frac{\widetilde{A}(i,i)}{\operatorname{tr}(\widetilde{A})}$ 
	and $\omega_{i} = 1$, the RCD method applied on the expanded linear system \eqref{eqn:expanded-system} has the following lower bound estimate in expectation,
	\begin{align} \label{ine:RCD-error-expectation-A-lower-expanded}
		\mathbb{E}(\| \bm{u} - \bm{u}^{k+1} \|_A^2)  \geq  \left(  1 - \frac{\lambda_{\max}(\widetilde{D}^{-1}\widetilde{A})}{\widetilde{N}} \right)  \mathbb{E}( \| \bm{u} - \bm{u}^k \|_A^2).
	\end{align}
\end{theorem}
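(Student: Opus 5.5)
The plan is to reuse the exact identity from the proof of \Cref{thm:RCD-expanded-cnvergence} and then bound it from below instead of above, exactly as in \Cref{thm-lower-bound-origin}. Write $\widetilde{\bm{e}}^k = \widetilde{\bm{u}} - \widetilde{\bm{u}}^k$ and $\bm{e}^k = \Pi\widetilde{\bm{e}}^k = \bm{u} - \bm{u}^k$. With $p_i = 1/\widetilde{N}$ and $\omega_i = 1$, that proof gives the conditional expectation exactly:
\begin{equation*}
\mathbb{E}(\| \bm{u} - \bm{u}^{k+1} \|_A^2 \vert \bm{u}^k) = (\widetilde{\bm{e}}^k, \widetilde{A}\widetilde{\bm{e}}^k) - \frac{1}{\widetilde{N}} (\widetilde{D}^{-1}\widetilde{A}\widetilde{\bm{e}}^k, \widetilde{A}\widetilde{\bm{e}}^k).
\end{equation*}
It therefore suffices to show
\begin{equation*}
(\widetilde{D}^{-1}\widetilde{A}\widetilde{\bm{e}}^k, \widetilde{A}\widetilde{\bm{e}}^k) \le \lambda_{\max}(\widetilde{D}^{-1}\widetilde{A})\,(\widetilde{A}\widetilde{\bm{e}}^k,\widetilde{\bm{e}}^k),
\end{equation*}
and then to use \eqref{u-tilde-u}, i.e.\ $(\widetilde{A}\widetilde{\bm{e}}^k,\widetilde{\bm{e}}^k) = \|\bm{u}-\bm{u}^k\|_A^2$.

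For this inequality I would symmetrize. Set $\bm{w} = \widetilde{A}^{1/2}\widetilde{\bm{e}}^k$, using the symmetric positive semi-definite square root. Then
\begin{equation*}
(\widetilde{D}^{-1}\widetilde{A}\widetilde{\bm{e}}^k, \widetilde{A}\widetilde{\bm{e}}^k) = (\widetilde{A}^{1/2}\widetilde{D}^{-1}\widetilde{A}^{1/2}\bm{w},\bm{w}) \le \lambda_{\max}(\widetilde{A}^{1/2}\widetilde{D}^{-1}\widetilde{A}^{1/2})\,\|\bm{w}\|^2 .
\end{equation*}
Since $\|\bm{w}\|^2 = (\widetilde{A}\widetilde{\bm{e}}^k,\widetilde{\bm{e}}^k)$, it remains to identify the largest eigenvalue. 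The matrix $\widetilde{A}^{1/2}\widetilde{D}^{-1}\widetilde{A}^{1/2}$ is similar on nonzero spectra to $\widetilde{D}^{-1}\widetilde{A}$, because $XY$ and $YX$ share nonzero eigenvalues. Its largest eigenvalue is therefore $\lambda_{\max}(\widetilde{D}^{-1}\widetilde{A})$. Here $\widetilde{D}$ is positive diagonal, since $\widetilde{A}(i,i) = \|\Pi \bm{e}_i\|_A^2 > 0$, so $\widetilde{D}^{-1}$ and all the square roots are well defined. The semi-definiteness of $\widetilde{A}$ causes no trouble: the null-space components of $\widetilde{\bm{e}}^k$ contribute zero to both sides.

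Combining these gives the conditional bound
\begin{equation*}
\mathbb{E}(\| \bm{u} - \bm{u}^{k+1} \|_A^2 \vert \bm{u}^k) \ge \left(1 - \frac{\lambda_{\max}(\widetilde{D}^{-1}\widetilde{A})}{\widetilde{N}}\right)\|\bm{u}-\bm{u}^k\|_A^2 .
\end{equation*}
Taking total expectation yields \eqref{ine:RCD-error-expectation-A-lower-expanded}.

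The only delicate point is this eigenvalue identification in the semi-definite setting, which the similarity argument handles. I would also remark that $\lambda_{\max}(\widetilde{D}^{-1}\widetilde{A}) \le \widetilde{N}$: the trace of $\widetilde{D}^{-1}\widetilde{A}$ is $\widetilde{N}$ and all its eigenvalues are nonnegative, so the factor is nonnegative and the bound is meaningful.
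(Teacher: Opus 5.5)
Your proposal is correct and follows the same route as the paper: reuse the exact conditional-expectation identity $\bigl((I - \widetilde{N}^{-1}\widetilde{D}^{-1}\widetilde{A})\widetilde{\bm{u}}-\widetilde{\bm{u}}^k, \dots\bigr)$ from the expanded-system upper-bound proof, bound the subtracted term by $\lambda_{\max}(\widetilde{D}^{-1}\widetilde{A})\,\|\bm{u}-\bm{u}^k\|_A^2$ via \eqref{u-tilde-u}, and take total expectation. The paper simply asserts that last inequality; your symmetrization through $\widetilde{A}^{1/2}\widetilde{D}^{-1}\widetilde{A}^{1/2}$ and the shared nonzero spectrum of $XY$ and $YX$ supplies the omitted justification, and so does your check that $\lambda_{\max}(\widetilde{D}^{-1}\widetilde{A}) \le \operatorname{tr}(\widetilde{D}^{-1}\widetilde{A}) = \widetilde{N}$.
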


\begin{proof}
 Following from the proof of \Cref{thm:RCD-expanded-cnvergence}, we have
 \begin{align}\label{eqn-proof-expand-1}
 	   \mathbb{E}(\| \bm{u} - \bm{u}^{k+1} \|_A^2 \vert \bm{u}^k) & = (  \left(  I - \frac{1}{\widetilde{N}} \sum_{i=1}^{\widetilde{N}} \frac{1}{\widetilde{A}(i,i)}  \bm{e}_{i}\bm{e}_{i}^T \widetilde{A}    \right) (\widetilde{\bm{u}} - \widetilde{\bm{u}}^{k}), \widetilde{A} (\widetilde{\bm{u}} - \widetilde{\bm{u}}^{k} ))  \\
 	& = (\left(I - \frac{1}{\widetilde{N}} \widetilde{D}^{-1}\widetilde{A}\right)(\widetilde{\bm{u}} - \widetilde{\bm{u}}^{k}), \widetilde{A} (\widetilde{\bm{u}} - \widetilde{\bm{u}}^{k} )) \nonumber \\
 	& \geq \Big( 1 - \frac{\lambda_{\max}(\widetilde{D}^{-1}\widetilde{A})}{\widetilde{N}}\Big)\|\bm{u}-\bm{u}^k)\|_A^2, \nonumber 
%     \mathbb{E}(\| \bm{u} - \bm{u}^{k+1} \|_A^2 \vert \bm{u}^k) & = (  \left(  I - \frac{1}{\operatorname{tr}(\widetilde{A})} \sum_{i=1}^N  \bm{e}_{i}\bm{e}_{i}^T \widetilde{A}    \right) (\widetilde{\bm{u}} - \widetilde{\bm{u}}^{k}), \widetilde{A} (\widetilde{\bm{u}} - \widetilde{\bm{u}}^{k} ))  \\
%			& = (\left(I - \frac{1}{\operatorname{tr}(\widetilde{A})}\widetilde{A}\right)(\widetilde{\bm{u}} - \widetilde{\bm{u}}^{k}), \widetilde{A} (\widetilde{\bm{u}} - \widetilde{\bm{u}}^{k} )). \nonumber
			% & = (\bm{u} - \bm{u}^k, \bm{u} - \bm{u}^k)_A -  \frac{1}{\operatorname{tr}(\widetilde{A})}(\Pi \Pi^T A (\bm{u} - \bm{u}^k), \bm{u} - \bm{u}^k)_A  \\
 \end{align}
where the last equation holds utilizing the relationship \eqref{u-tilde-u} and this completes the proof.
\end{proof}

In what follows, we obtain the lower bound of computational complexity for the expanded system based on the above \Cref{thm-lower-bound-expand}, which is summarized in the following theorem.
\begin{theorem}\label{thm-expanded-lower}
	Under the assumption of \Cref{thm-expanded-upper}, the lower bound of complexity for the RCD method applied above is $\Omega(\operatorname{poly}(d)\epsilon^{-d}\log (\epsilon^{-1}))$ in expectation.
\end{theorem}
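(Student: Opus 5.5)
We mirror the proof of \Cref{thm-orig-lower}, using \Cref{thm-lower-bound-expand} instead of \Cref{thm-lower-bound-origin}. Take the initial guess $\widetilde{\bm{u}}^0=\bm{0}$, so that $\bm{u}^0=\Pi\widetilde{\bm{u}}^0=\bm{0}$. Iterating \eqref{ine:RCD-error-expectation-A-lower-expanded} $k$ times and taking full expectations gives
\begin{equation*}
\mathbb{E}(\|\bm{u}-\bm{u}^k\|_A^2)\ \geq\ \Big(1-\frac{\lambda_{\max}(\widetilde{D}^{-1}\widetilde{A})}{\widetilde{N}}\Big)^k\|\bm{u}\|_A^2 .
\end{equation*}
The factor in parentheses lies in $(0,1)$ once $\widetilde{N}$ exceeds $\lambda_{\max}(\widetilde{D}^{-1}\widetilde{A})$. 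This holds for small $\epsilon$, since $\lambda_{\max}(\widetilde{D}^{-1}\widetilde{A})=\Theta(\operatorname{poly}(d))$ while $\widetilde{N}=\Theta(\epsilon^{-d})$.

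Next we apply $(1-x)^k\geq \exp(-kx/(1-x))$ with $x=\lambda_{\max}(\widetilde{D}^{-1}\widetilde{A})/\widetilde{N}$. Requiring the right-hand side to drop below the target $\epsilon^2$ (relative to $\|\bm{u}\|_A^2$) forces
\begin{equation*}
k=\Omega\Big(\frac{\widetilde{N}-\lambda_{\max}(\widetilde{D}^{-1}\widetilde{A})}{\lambda_{\max}(\widetilde{D}^{-1}\widetilde{A})}\log\frac{1}{\epsilon}\Big).
\end{equation*}
For fewer iterations, the expected relative energy error still exceeds the target. To compare with the data norm we also need $\|\bm{u}\|_A\gtrsim \|f\|$ up to $\operatorname{poly}(d)$ factors. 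This should be read as a statement about the worst case, or a generic right-hand side, since we are proving a lower bound.

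We then insert the spectral information. By the BPX/multilevel theory quoted before \eqref{eigmin_bound-expand}, $\kappa(\widetilde{D}^{-1}\widetilde{A})=\Theta(\operatorname{poly}(d))$ and $\lambda_{\min}(\widetilde{D}^{-1}\widetilde{A})=\Theta(\operatorname{poly}(d))$, so $\lambda_{\max}(\widetilde{D}^{-1}\widetilde{A})=\Theta(\operatorname{poly}(d))$. An elementary alternative is Gershgorin-type reasoning, which bounds $\lambda_{\max}$ of a Jacobi-scaled matrix by the number of levels times a $\operatorname{poly}(d)$ constant; this costs at most a logarithmic factor. Combined with $\widetilde{N}=\Theta(\epsilon^{-d})$, this gives $k=\Omega(\operatorname{poly}(d)(\epsilon^{-d}-1)\log\epsilon^{-1})$ iterations.

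Finally, each RCD step costs at least $\Omega(1)$ operations, since one must sample an index and touch at least the diagonal entry and one residual entry. Hence the total cost is $\Omega(\operatorname{poly}(d)\epsilon^{-d}\log\epsilon^{-1})$ in expectation.

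The main obstacle is the upper estimate on $\lambda_{\max}(\widetilde{D}^{-1}\widetilde{A})$ that is polynomial in $d$ and independent of $h$, because that is what keeps the lower bound exponential in $d$. I would take it from the cited multilevel results \cite{griebel2013multilevel, jiang2025polynomial}, which give the strengthened Cauchy--Schwarz bound. If only a bound growing like $L=\Theta(\log \epsilon^{-1})$ were available, the conclusion would lose a logarithm but would remain exponential in $d$. A secondary, minor point is the relation between $\|\bm{u}\|_A$ and $\|f\|$, which is handled as in \Cref{thm-orig-lower}.
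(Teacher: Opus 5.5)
Your proposal is correct and follows the paper's proof step for step: iterate the expectation lower bound of \Cref{thm-lower-bound-expand} from $\bm{u}^0=\bm{0}$, apply $(1-x)^k\geq\exp(-kx/(1-x))$ with $x=\lambda_{\max}(\widetilde{D}^{-1}\widetilde{A})/\widetilde{N}$, and insert $\lambda_{\max}(\widetilde{D}^{-1}\widetilde{A})=\Theta(\operatorname{poly}(d))$ and $\widetilde{N}=\Theta(\epsilon^{-d})$. The differences are cosmetic: you charge $\Omega(1)$ per step where the paper charges $\Omega(d)$ (both are absorbed into $\operatorname{poly}(d)$), and you state explicitly two things the paper leaves tacit, namely that the bound on $\lambda_{\max}(\widetilde{D}^{-1}\widetilde{A})$ follows from the bounds on $\kappa$ and $\lambda_{\min}$, and that stating the result relative to $\|f\|$ requires $\|\bm{u}\|_A\geq c\,\|f\|$ for a suitable worst-case right-hand side.
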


\begin{proof}
The convergence lower bound \eqref{ine:RCD-error-expectation-A-lower-expanded} implies that, if $\bm{u}^0 = \bm{0}$, 
\begin{align*}
	\mathbb{E}(\| \bm{u} - \bm{u}^{k} \|_A^2)  \geq  \left(  1 - \frac{\lambda_{\max}(\widetilde{D}^{-1}\widetilde{A})}{\widetilde{N}} \right)^{k} \| \bm{u}  \|_A^2.
\end{align*}
Using the inequality $\displaystyle (1-x)^k \geq \operatorname{exp}(-\frac{kx}{1-x})$, $0<x<1$, we have
\begin{align*}
	k = \Omega(\frac{\widetilde{N} - \lambda_{\max}(\widetilde{D}^{-1}\widetilde{A})}{\lambda_{\max}(\widetilde{D}^{-1}\widetilde{A})} \log \frac{1}{\epsilon}) = \Omega(\operatorname{poly}(d)(\epsilon^{-d}-1)\log\frac{1}{\epsilon}).
\end{align*}
%\begin{align*}
%k = \Omega(\frac{\operatorname{tr}(\widetilde{A}) - \lambda_{\min}(\widetilde{A})}{\lambda_{\min}(\widetilde{A})} \log \frac{1}{\epsilon}) = \Omega((\epsilon^{-d}-1)\log\frac{1}{\epsilon}).
%\end{align*}
So, after $k$ such iterations, the relative error in expectation is still greater than $\epsilon$.   Since the lower bound of each iteration of RCD is $\Omega(d)$ corresponding to accessing a sparse row, the lower bound on the overall cost of $k$ steps is $\Omega(dk)$ for the expanded system. Therefore, the complexity lower bound is 
\begin{align}\label{eqn:lower-bound-expanded}
\Omega(\operatorname{poly}(d)(\epsilon^{-d}-1)\log \frac{1}{\epsilon}),  
\end{align}
which completes the proof.
\end{proof}

In summary, 
% \cref{thm-expanded-lower} shows that, 
for the expanded system with preconditioning,   the computational complexity to achieve $\epsilon$-accuracy is still exponential in $d$.

\section{General Quantum-inspired Classical Algorithms for FEM} \label{sec:QICA-FEM} 
In \Cref{sec:RCD-FEM}, we demonstrated that the RCD  method, the current state-of-the-art quantum-inspired algorithm for sparse SPD linear systems, fails to achieve an exponential speedup in dimension when applied to the FEM. Nevertheless, the possibility remains that a yet-undiscovered quantum-inspired classical algorithm could achieve such a speedup. In this section, we address this possibility by examining the lower bounds of general quantum-inspired classical algorithms. Following the approach in \cite{mande2025lower}, we utilize communication complexity as our primary technique for deriving these bounds. Furthermore, we restrict our analysis to the class of quantum-inspired algorithms that utilize the following sampling and query access models for matrices and vectors defined in \cite{chia2022sampling, shao2022faster}.
\begin{definition}[Query access $Q(\bm{v})$ and $Q(A)$]
	For a vector $\bm{v} \in \mathbb{C}^n$, we have $Q(\bm{v})$  if for all $i \in[n]$, we can query for $v_i$. Likewise, for a matrix $A=\left(A(i, j)\right) \in \mathbb{C}^{m \times n}$, we have $Q(A)$ if for all $(i, j) \in[m] \times[n]$, we can query for $A(i,j)$.
\end{definition}
\begin{definition}[Sampling and query access to a vector $S Q(\mathbf{v})$]
	For a vector $\bm{v} \in \mathbb{C}^n$, we have $S Q(\mathbf{v})$ if we can
	\begin{itemize}
		\item[-] query for entries of $\bm{v}$ as in $Q(\bm{v})$;
		\item[-] obtain independent samples of indices $i \in[n]$, each distributed as $\operatorname{Pr}(i)=\left|\bm{v}(i)\right|^2 /\|\bm{v}\|^2$;
		\item[-] query for $\|\bm{v}\|$.
	\end{itemize}
\end{definition}
\begin{definition}[Sampling and query access to a matrix $S Q(A)$]
	For a matrix $A \in \mathbb{C}^{m \times n}$, we have $S Q(A)$ if we have $S Q\left(A(i, :)\right)$ for all $i \in[m]$ and $S Q(\mathbf{a})$ for $\mathbf{a}=\left(\left\|A(1,:)\right\|, \ldots,\left\|A(m,:)\right\|\right)$. %Here $A_{i *}$ refers to the $i$-the row of $A$.
\end{definition}
In the quantum-inspired framework, computational complexity is measured by the total number of sampling and query accesses. Therefore, deriving a lower bound for these algorithms is equivalent to finding the lower bound on the number of such accesses necessary to achieve the desired output.

%\textcolor{blue}{JA: I wonder if we should be more direct in these definitions?  I know these are the formal definitions, but to ``translate'' from this QC world's language to ours, we could be more explicit?  For instance, not use the word ``query'' to define ``query''. Also, this still seems a bit out of place here.  Where do we use these definitions precisely?}
%\textcolor{red}{XH: I did not really change the defintion because these are formal definitions. I feel we should not change those. In addition, I added a paragraph to explain why we need those.}

While the complexity analysis in \cite{mande2025lower} provides a lower bound for general matrices, which inherently applies to systems discretized from PDEs, this general bound may not be tight for the specific subset of FEM linear systems we consider here. Consequently, we provide a lower bound specifically for linear systems discretized from the PDEs described in \eqref{eqn:Ax=b}.

  % This model is essential for our lower bound analysis in \Cref{sec:QICA-FEM} based on the communication complexity analysis \cite{mande2025lower}, which again answer the questions negatively. 

%The main idea is to establish a connection between them. Then, we can use the well-known lower bounds for the Set-Disjointness problem to obtain our results.

% \subsection{A Lower Bound for General Linear Systems}
We first recall the methodology and results from \cite{mande2025lower}, as we adopt a similar constructive approach. Communication complexity, originally proposed for distributed computation in~\cite{yao1979some}, is widely utilized across numerous fields. A primary application lies in establishing lower bounds, which has been extensively studied in classical, quantum, and quantum-inspired contexts \cite{de2002quantum, rao2020communication, mande2025lower}. Following the framework in \cite{mande2025lower}, our work focuses on the \textit{coordinator model} \cite{phillips2012lower}.

In the \textit{coordinator model}, there are $k\geq 2$ players $\mathcal{P}_1,...,\mathcal{P}_k$ and a coordinator $\mathcal{C}$. Each player holds some private information, and their goal is to solve some problem (such as solving $A\bm{x} = \bm{b}$) using as little communication as possible. The communication occurs between a player and the coordinator via a 2-way private channel. The computation is in terms of rounds: at the beginning of each round, the coordinator sends a message to one of the $k$ players, and then that player sends a message back to the coordinator. In the end, the coordinator returns an answer. The communication complexity is defined to be the total number of bits sent through the channels, on a worst-case input and worst-case outcomes basis of the internal randomness of the protocol. A fundamental example is the Set-Disjointness problem, which is stated as follows.

\begin{definition}[$k$-player Set-Disjointness problem]\label{disjointness}
    For $i\in\{1,\cdots,k\}$, player $\mathcal{P}_i$ receives a bit string $T_{i} = (T_{i1}, T_{i2}, \cdots, T_{in})\in\{0,1\}^n$. Their goal is to determine if there is a $j\in\{2,3,\cdots,k\}$ such that $T_{1l} = T_{jl} = 1$ for some $l\in\{1,2,\cdots, n\}$.
\end{definition}

The following proposition details the communication complexity of $k$-player Set-Disjointness, a cornerstone for establishing lower bounds in many complexity frameworks.

\begin{proposition}[Theorem 3.3 of \cite{phillips2012lower}, Proposition 5 of \cite{mande2025lower}]\label{pro-4.2}
When $n\geq 3200 k$, for any classical protocol that succeeds with probability $1-1/k^3$ for solving the $k$-player Set-Disjointness problem, the randomized communication complexity is $\Theta(kn)$. %\textcolor{red}{(We are introducing new notation here, need to be careful what does this mean --XH)}
\end{proposition}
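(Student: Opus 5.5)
The upper bound is elementary and I would dispose of it first. The coordinator asks $\mathcal{P}_1$ for its whole string $T_1$, which costs $n$ bits plus $\mathcal{O}(\log k)$ bits of addressing. It then asks each $\mathcal{P}_j$, $j\ge 2$, for $T_j$, and checks directly whether some $l$ has $T_{1l}=T_{jl}=1$. This protocol is deterministic and always correct, and it uses $\mathcal{O}(kn)$ bits, so the randomized complexity is $\mathcal{O}(kn)$. All the difficulty is in the matching $\Omega(kn)$ lower bound. There I would follow the \emph{symmetrization} technique of \cite{phillips2012lower}: reduce a $k$-player coordinator protocol to a two-party protocol whose cost is about a $1/k$ fraction of the original, and then apply a two-party $\Omega(n)$ lower bound.

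\textbf{Step 1: a hard symmetric input distribution.} By Yao's principle it suffices to bound the distributional complexity under some distribution $\mu$. I would take $\mu$ to be invariant under permutations of the players $\mathcal{P}_2,\dots,\mathcal{P}_k$. The string $T_1$ is a random set of size about $n/4$. Each $T_j$, $j\ge 2$, is drawn so that, with constant probability, exactly one randomly chosen player $\mathcal{P}_{j^*}$ intersects $T_1$, and only in a single coordinate; otherwise no player intersects $T_1$. The remaining coordinates of the $T_j$ are filled with random elements of the complement of $T_1$.

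\textbf{Step 2: symmetrization.} Given a $k$-player protocol $\Pi$ with worst-case cost $C$, I would build a two-party protocol as follows:
\begin{itemize}
\item pick $j\in\{2,\dots,k\}$ uniformly at random using shared randomness;
\item Bob plays $\mathcal{P}_j$ and holds $T_j$;
\item Alice simulates the coordinator, $\mathcal{P}_1$, and all the other players, sampling their inputs from $\mu$ conditioned on $(T_1,T_j)$.
\end{itemize}
The only real communication is the traffic on channel $j$. Because $\mu$ is symmetric in $j$, this traffic has expectation at most $C/(k-1)$. The output is the same as that of $\Pi$, so the error is at most $1/k^3$.

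The two-party protocol therefore solves the induced problem of deciding whether $T_1\cap T_j\neq\emptyset$. Alice also learns the union of the other players' sets, but in $\mu$ that union carries no information about the hidden intersecting coordinate.

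\textbf{Step 3: two-party lower bound and the main obstacle.} It remains to show that the induced two-party problem under the marginal of $\mu$ has distributional complexity $\Omega(n)$. I would argue this via a Razborov-type corruption/rectangle bound, or via information complexity with a direct sum over coordinates. That gives $C/(k-1)\ge \Omega(n)$, hence $C=\Omega(kn)$.

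I expect this step to be the main obstacle. The induced distribution is not the standard hard distribution for two-party disjointness, because intersection occurs only with probability about $1/k$. The lower bound must survive both this skew and the error allowance of $1/k^3$. This is where the hypothesis $n\ge 3200k$ enters: it guarantees enough non-intersecting coordinates that the concentration and rectangle-corruption estimates still hold. My first attempt would be to bound the information that channel $j$ reveals about each individual coordinate of $T_j$, and then sum these contributions over the coordinates.
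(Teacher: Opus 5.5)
The paper gives no proof of this proposition. It imports it from Phillips--Verbin--Zhang via Mande--Shao, whose technique is the symmetrization you outline, so your overall route (including the trivial $\mathcal{O}(kn)$ upper bound) is the right one. There is, however, a concrete failure in Step~2.

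\textbf{Step 2 does not give the induced two-party problem.} Alice holds only $T_1$, so she cannot sample the other players ``from $\mu$ conditioned on $(T_1,T_j)$''. Under your $\mu$ those sets are correlated with $T_j$ given $T_1$, because at most one player intersects. Worse, with constant probability under $\mu$ the intersecting player is some simulated $\mathcal{P}_{j'}$ with $j'\neq j$. Then the $k$-player answer is ``intersecting'' regardless of $T_j$, so your claim that the two-party protocol decides whether $T_1\cap T_j\neq\emptyset$ is false.

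The missing step is that Alice must draw every simulated set inside $[n]\setminus T_1$, e.g.\ as independent uniform $\ell$-subsets. This needs only $T_1$, and it makes the output of $\Pi$ exactly the indicator of $T_1\cap T_j\neq\emptyset$. The simulated input is then no longer distributed as $\mu$, so ``channel $j$ carries $C/(k-1)$ by symmetry of $\mu$'' must be re-derived. It does hold on the all-disjoint inputs: their distribution is invariant under permuting $\mathcal{P}_2,\dots,\mathcal{P}_k$, and on every input the per-channel costs sum to at most $C$.

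\textbf{Step 3 is left open, and the obstacle is misdiagnosed.} With the per-input error guarantee you never have to confront the skew, and the tiny error $1/k^3$ helps rather than hurts. Here is how to close it.
\begin{itemize}
\item Truncate channel-$j$ traffic at $4C/(k-1)$ bits and output ``intersecting'' on abort. Aborts then cause errors only on disjoint pairs, with probability at most $1/4$ by Markov. On intersecting pairs the error stays at most $1/k^3$.
\item The abort output matters: if aborts could output ``disjoint'', nothing would control the intersecting inputs, whose cost Markov does not bound.
\item Fix all randomness, including $j$ and Alice's sampling. This gives a deterministic protocol with $c=\mathcal{O}(C/k)$ bits. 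Its ``disjoint'' rectangles have $\mu_0$-mass at least $1/2-\mathcal{O}(k^{-3})$ and $\mu_1$-mass $\mathcal{O}(k^{-3})$, where $\mu_0,\mu_1$ are Razborov's disjoint and uniquely-intersecting distributions on pairs of $\ell$-subsets of $[4\ell-1]$.
\item Sum Razborov's corruption bound $\mu_1(R)\ge\alpha\,\mu_0(R)-2^{-\delta n}$ over these at most $2^c$ rectangles. Once $k$ exceeds a constant, this forces $c\ge\delta n-\mathcal{O}(1)$, hence $C=\Omega(kn)$.
\item For bounded $k$, ordinary two-party disjointness already gives $\Omega(n)=\Omega(kn)$.
\end{itemize}
This route never uses $n\ge 3200k$, so, contrary to your guess, that hypothesis is not what makes the argument work.
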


Next we present the connection between quantum-inspired classical algorithms and communication complexity in the following proposition, which plays an important role in proving the lower bound. 
\begin{proposition}[Theorem 10 of \cite{mande2025lower}]\label{pro-disjoint}
In the multi-player coordinator model, for each $i \in\{1, \ldots, k\}$, assume that player $\mathcal{P}_i$ holds a matrix $A^{(i)} \in \mathbb{R}^{\ell_i \times n}$ and a vector $\bm{b}^{(i)} \in \mathbb{R}^{m_i}$ with $m:=\sum_i \ell_i=\sum_i m_i$. Assume that all entries are specified by $\mathcal{O}(\log q)$ bits. Let

$$
A=\left(\begin{array}{c}
A^{(1)} \\
\vdots \\
A^{(k)}
\end{array}\right)_{m \times n}, \quad \bm{b}=\left(\begin{array}{c}
\bm{b}^{(1)} \\
\vdots \\
\bm{b}^{(k)}
\end{array}\right)_{m \times 1} .
$$
Then, we have the following results:
\begin{itemize}
    \item[-] The coordinator $\mathcal{C}$ can use $S Q(A) ~\mathcal{O}(T)$ times, using $\mathcal{O}((T+k) \log (q m n))$ bits of communication.
    \item[-] The coordinator $\mathcal{C}$ can use $S Q(\bm{b})~ \mathcal{O}(T)$ times, using $\mathcal{O}((T+k) \log (q m))$ bits of communication.
\end{itemize}
\end{proposition}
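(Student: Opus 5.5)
We will prove \Cref{pro-disjoint} by simulating each sampling/query access to the distributed object with a constant number of rounds of communication, each round carrying $\mathcal{O}(\log(qmn))$ bits. The only preprocessing needed is a one-time round in which every player $\mathcal{P}_i$ sends the coordinator the squared Frobenius norm $\|A^{(i)}\|_F^2$ of its block (and, for the vector part, $\|\bm{b}^{(i)}\|^2$). Since all entries have $\mathcal{O}(\log q)$ bits, these norms are rationals representable with $\mathcal{O}(\log(qmn))$ bits. This preprocessing costs $\mathcal{O}(k\log(qmn))$ bits and accounts for the additive $k$ in $(T+k)$.

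After preprocessing, the coordinator knows the global row-norm vector $\mathbf{a}$ only in aggregate, but it can still sample from it in two stages. First it samples a player $i$ with probability $\|A^{(i)}\|_F^2/\|A\|_F^2$, which it can do locally. It then asks $\mathcal{P}_i$ to return a row index $r$ drawn from its local rows with probability $\|A^{(i)}(r,:)\|^2/\|A^{(i)}\|_F^2$, together with $\|A(r,:)\|$. The product of the two probabilities is exactly $\|A(r,:)\|^2/\|A\|_F^2$, so this realizes $SQ(\mathbf{a})$ sampling.

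The remaining operations are direct. Querying $\|\mathbf{a}\|=\|A\|_F$ is free, since the coordinator already holds the preprocessed norms. Querying an entry $\mathbf{a}(r)$ or $A(r,j)$ requires knowing which player owns global row $r$. The coordinator knows the block sizes $\ell_i$, which are either fixed by the model or sent as part of the preprocessing at $\mathcal{O}(\log m)$ bits each, so it can route the query to the owner. Sampling from a row $A(r,:)$ and querying $\|A(r,:)\|$ are purely local to the owning player. Each of these operations sends an index of $\mathcal{O}(\log m+\log n)$ bits and receives a value of $\mathcal{O}(\log(qn))$ bits, since a squared row norm is a sum of $n$ numbers of $\mathcal{O}(\log q)$ bits. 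Summing $T$ operations and adding the preprocessing gives $\mathcal{O}((T+k)\log(qmn))$ bits. The vector case is identical, with rows replaced by entries, which removes the $n$ and gives $\mathcal{O}((T+k)\log(qm))$.

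The main obstacle is bit precision. Norms such as $\|A(r,:)\|$ are square roots, and the sampling probabilities are real numbers, so exact transmission of either is impossible. I would address this by communicating squared norms only, which are exact rationals, and by requiring each sample to be exact in distribution. Each sample is generated locally by a player who knows the exact rational weights, or by the coordinator using the exact rational $\|A^{(i)}\|_F^2$ values. As a result, no real number ever needs to be transmitted, and the $\mathcal{O}(\log(qmn))$-bit budget per access is sufficient.
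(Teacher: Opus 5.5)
Your proposal is correct. This paper does not prove the statement itself; it imports it as Theorem 10 of \cite{mande2025lower}, and your argument is essentially the argument behind that result, with one implicit assumption noted below. A one-time exchange of block norms pays the additive $k$ term. After that, each access is either a two-stage sample or a query routed to the owning player, costing $\mathcal{O}(\log(qmn))$ bits. In the two-stage sample, the coordinator picks a player by block norm and that player samples a row or entry locally.

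The one thing to state explicitly is your reading of ``$\mathcal{O}(\log q)$ bits per entry'' as a fixed-point (common-denominator) encoding. That reading is what makes your squared norms fit in $\mathcal{O}(\log(qmn))$ bits; with arbitrary rationals, summing entries with different denominators could exceed that budget.
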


Based on \Cref{pro-disjoint}, the authors in \cite{mande2025lower} provide lower bounds for solving several different problems, such as \textit{linear regression, supervised clustering, principal component analysis, etc.} Our problem is analogous to the \textit{linear regression} problem, so we specifically focus on its corresponding lower bound, and adjust the construction accordingly.

\subsection{A Lower Bound for Quantum-inspired FEM}
The goal of this subsection is to prove a result for the linear system \eqref{eqn:Ax=b} arising from the FEM similar to 
%\Cref{thm-lower-QI}. 
\cite{mande2025lower}.
%, the proof of \Cref{thm-lower-QI} is constructive. 
Since the authors there considered general linear systems, they constructed a specific matrix $A$ and right-hand side $\bm{b}$, demonstrating that solving the corresponding linear system (or the linear regression problem) is equivalent to the $k$-player Set-Disjointness problem, thereby establishing the lower bound.

While we adopt a similar constructive approach, we do not have the same flexibility in constructing the matrix, as $A$ is fixed by the FEM discretization. Consequently, we construct a specific solution (or, equivalently, the right-hand side) to reduce our problem to a $k$-player Set-Disjointness problem. This reduction allows us to establish the lower bound for applying quantum-inspired classical algorithms to FEM linear systems. 
%The result is summarized in the following theorem.

Let us consider solving Poisson's equation with pure Neumann boundary conditions on a uniform triangulation mesh with mesh size $h$ and discretized with linear finite elements.   The sparsity of the discrete Laplacian corresponds to a graph (with $N$ nodes and $M$ edges).  Note that the nodes of the graph correspond to the vertices of the mesh, but the graph has fewer edges than the mesh. For example, in 2D, each node of the graph is incident to $4$ edges, whereas the corresponding mesh vertex is incident to $6$ edges, since the entries in the stiffness matrix corresponding to the two diagonal edges are zeros and, hence, the corresponding mesh edges are absent from the graph.   From the sparsity graph, we pick an independent set $\{v_1, \cdots, v_k\}$ and label them as the first $k$ vertices.  Then, we construct a special solution as follows,
\begin{align} \label{eqn:k-player-solution}
\bm{u}^* = \bm{e}^N_1 +
n\sum_{j=2}^k\overline{\bm{t}}_1^T \overline{\bm{t}}_j \bm{e}^N_j \in \mathbb{R}^N,
\end{align}
where the inputs to the $k$-player Set-Disjointness problem are $T_1, \ldots, T_k \in \{0,1\}^n$ with Hamming weight $\Theta(n)$ (i.e., number of nonzero $T_i$ is $\Theta(n)$) and $n \geq 3200k$.  Each player $\mathcal{P}_j$ constructs $\bm{t}_j = \sum_\ell T_{j\ell} \bm{e}^n_\ell$ and $\overline{\bm{t}}_j = \bm{t}_j/\|\bm{t}_j\|$. Here, $\bm{e}^n_{\ell}$ is the $\ell$-th unit basis vector of $\mathbb{R}^{n}$.
 
Next we use $\bm{u}^*$ to define the right-hand side. However, $A \bm{u}^*$ contains $\overline{\bm{t}}_1^T \overline{\bm{t}}_j$, which requires an information exchange between the players.  Instead, we define an equivalent least problem (i.e., linear regression).   To demonstrate our construction, first define 
\begin{align*}
\bm{x}^* = G \bm{u}^* =  \bm{g}_1 +
n\sum_{j=2}^k\overline{\bm{t}}_1^T \overline{\bm{t}}_j \bm{g}_j    \in \mathbb{R}^M,
\end{align*}
where $G \in \mathbb{R}^{M \times N}$ is the discrete gradient on the sparsity graph, or the incidence matrix between edges and nodes, and  $\bm{g}_j = G\bm{e}_j \in \mathbb{R}^M$.  Since $\{v_1, \cdots, v_k\}$ is an independent set, nonzeros components of $\bm{g}_i$ and $\bm{g}_j$ do not overlap with each other, and thus $\bm{g}_i^T \bm{g}_j = 0$ for $i\neq j$ and $i,\, j \in \{1, \dots, k\}$.  Defining $\delta_i$ as the degree of node $v_i$ of the sparsity graph, this implies that if we use $\widetilde{\bm{g}}_j \in \mathbb{R}^{\delta_i}$ to denote the vector that contains only the nonzeros components of $\bm{g}_i$ and order the edges properly, then $\bm{x}^*$ has nice block structure, 
\begin{align*}
	\bm{x}^* = 
	\begin{pmatrix}
		\widetilde{\bm{g}}_1 \\
		n  \overline{\bm{t}}_1^T \overline{\bm{t}}_2  \widetilde{\bm{g}}_2 \\
		\vdots \\
		n  \overline{\bm{t}}_1^T \overline{\bm{t}}_k  \widetilde{\bm{g}}_k \\
		\bm{0}
	\end{pmatrix}.
\end{align*}
Now we reverse-engineer and define
\begin{align} \label{eqn:define-B}
B = \mathrm{diag}\!\left(
 I_{(\delta_1)} \otimes \bm{e}^n_1,\;
I_{(\delta_2)} \otimes \overline{\bm{t}}_2,\; \ldots,\;
I_{(\delta_k)} \otimes \overline{\bm{t}}_k,\;
\tfrac{1}{\sqrt{n}} I_{(M-\sum_{j=1}^k \delta_j)} \otimes \bm{1}
\right)_{Mn \times M},
\end{align}
which satisfies $B^TB=I$, and
\begin{align} \label{eqn:define-b}
\bm{b} = 
	\begin{pmatrix}
	\widetilde{\bm{g}}_2 \otimes e^n_{1}, \\
	n  \widetilde{\bm{g}}_2 \otimes \overline{\bm{t}}_1 \\
	\vdots \\
	n  \widetilde{\bm{g}}_k \otimes \overline{\bm{t}}_1   \\
	\bm{0}
\end{pmatrix}
\in \mathbb{R}^{Mn}.
\end{align}
It is easy to check that $\bm{x}^*$ solves $\displaystyle \min_{\bm{x} \in \mathbb{R}^M} \frac{1}{2}|| B\bm{x}-b ||^2$. Since $\bm{x}^* = G \bm{u}^*$, though, we can rewrite the linear regression problem as 
\begin{align} \label{eqn:LS-prob}
 \displaystyle  \min_{\bm{u} \in \mathbb{R}^N} \frac{1}{2} \| BG \bm{u} - \bm{b} ||^2,
\end{align}
which is equivalent to solving $G^TB^TB G \bm{u}^* = G^TB^T\bm{b}$.  Since $B^TB = I$, we have  $A \bm{u}^* = \bm{f}$ where $A = h^{d-2} G^TG$ and $\bm{f} = h^{d-2}  G^T B^T \bm{b}$.  Note that $A$ is exactly a discretization of the Poisson equation on a uniform mesh with linear finite elements and pure Neumann boundary conditions. Therefore, solving the least-squares problem is equivalent to solving the Poisson equation.  Due to our special choice of the solution $\bm{u}^*$, it can then be reduced to the $k$-player Set-Disjointness problem. 

Note that the matrix $B$ and vector $\bm{b}$ both satisfy the conditions in \cref{pro-disjoint}.  Thus, we know how to sample/query them via $SQ(B)$ and $SQ(\bm{b})$.  However, to solve \eqref{eqn:LS-prob}, we also need sampling and querying of $BG$, i.e., $SQ(BG)$.  This is obtained in the following theorem.  

	\begin{theorem}[Communication complexity for special $B$, $\bm{b}$, and $G$]
	\label{thm:communication-complexity}
	%In the $k$-player coordinator model, let $\{v_1, \ldots, v_k\}$ be an independent set of a graph with $N$ nodes, $M$ edges, and maximum degree $\delta_{\max}$. Let the inputs to the $k$-player Set-Disjointness problem be $T_1, \ldots, T_k \in \{0,1\}^n$ with Hamming weight $\Theta(n)$ and	$n \geq 3200k$. Each player $P_j$ constructs	$t_j = \sum_\ell T_{j\ell}|\ell\rangle$ and $|t_j\rangle = t_j/\|t_j\|$.
	Let $G \in \mathbb{R}^{M \times N}$ be a discrete gradient matrix and let $B$ and $\bm{b}$ be defined as in \eqref{eqn:define-B} and \eqref{eqn:define-b}.   Assume that all entries are specified by $\mathcal{O}(\log q)$ bits and $\delta_{\max}$ denotes the maximal degree of the underlying sparsity graph, 
	%$|g_j\rangle$ the normalized restriction of $G|j\rangle$ to edges incident
	%to $v_j$, and $\beta_A, \beta_b > 0$ with $\beta_b = \Theta(\beta_A)$.
	%where blocks of $A$ and $b$ for $v_1$ and remaining edges are\textbf{public}, player $P_j$ holds the block of $A$ for edges incident to $v_j$ ($j = 2, \ldots, k$), and player $P_1$ holds the blocks of $b$ for edges incident to $v_j$ ($j = 2, \ldots, k$). 
	then:
	\begin{itemize}
		\item The coordinator $C$ uses $SQ(BG)$ $\mathcal{O}(T)$ times, using
		$\mathcal{O}\!\left((T + k)\,\log(qkn \delta^2_{\max})\right)$
		bits of communication.
		
		\item The coordinator $C$ uses $SQ(\bm{b})$ $\mathcal{O}(T)$ times, using
		$\mathcal{O}\!\left((T + k)\,\log(qkn\delta_{\max})\right)$
		bits of communication.
	\end{itemize}
\end{theorem}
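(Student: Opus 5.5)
We reduce both claims to \Cref{pro-disjoint} by exhibiting how the rows of $BG$ and the entries of $\bm{b}$ are distributed among the players, and by showing that each sampling or query operation needs only $\mathcal{O}(1)$ messages of $\mathcal{O}(\log(qkn\delta_{\max}^2))$ bits.

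\textbf{Distribution of data.} Since $B$ is block diagonal, the rows of $BG$ split into blocks. The block for $v_1$ is $(I_{(\delta_1)}\otimes \bm{e}^n_1)\widetilde{G}_1$. For $j\ge 2$ the block is $(I_{(\delta_j)}\otimes\overline{\bm{t}}_j)\widetilde{G}_j$, and the final block is $\tfrac{1}{\sqrt n}(I\otimes \bm{1})\widetilde{G}_0$. Here $\widetilde{G}_j$ denotes the rows of $G$ indexed by the edges incident to $v_j$, and $\widetilde{G}_0$ the remaining rows. The rows of $G$ are fixed and public, and each has exactly two nonzeros $\pm1$. Hence the block for $v_1$ and the final block are public, while the block for $v_j$ depends only on $\overline{\bm{t}}_j$ and is held by $\mathcal{P}_j$. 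Likewise every nonzero block of $\bm{b}$ involves only $\widetilde{\bm{g}}_j$ (public) and $\overline{\bm{t}}_1$ or $\bm{e}^n_1$, so $\bm{b}$ is held by $\mathcal{P}_1$ together with public data. This matches the layout of \Cref{pro-disjoint}, with $m = Mn$ rows for $BG$ and $\bm{b}$.

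\textbf{Row structure and norms.} Row $(e,\ell)$ of block $j$ equals $(\overline{\bm{t}}_j)_\ell\, G(e,:)$. So:
\begin{itemize}
\item it has at most two nonzeros, each $\pm(\overline{\bm{t}}_j)_\ell$, specified by $\mathcal{O}(\log q)$ bits;
\item its squared norm is $2(\overline{\bm{t}}_j)_\ell^2$;
\item the block's total squared Frobenius norm is $2\delta_j$, because $\|\overline{\bm{t}}_j\|=1$.
\end{itemize}
All block norms are therefore publicly known; the public blocks have norms $2\delta_1$ and $2(M-\sum_j\delta_j)$.

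\textbf{Simulating one $SQ(BG)$ operation.} The coordinator proceeds as follows.
\begin{itemize}
\item To sample a row from the row-norm distribution, it first picks a block $j$ with probability proportional to the block's squared norm. This needs no communication once the $k$ block norms are known, which costs the additive $\mathcal{O}(k)$ term.
\item Within block $j$, it picks $e$ uniformly among the $\delta_j$ incident edges (locally) and asks $\mathcal{P}_j$ to sample $\ell\sim(\overline{\bm{t}}_j)_\ell^2$. The reply is $\ell$ and the value $(\overline{\bm{t}}_j)_\ell$.
\item Sampling a column within a row is local: choose one of the two endpoints of $e$ uniformly.
\item Queries of entries and row norms need only one value $(\overline{\bm{t}}_j)_\ell$ from $\mathcal{P}_j$.
\end{itemize}
Each message names an index in $[Mn]\times[N]$. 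By sparsity this can be encoded relative to the player's block, namely block index $j\le k$, edge index $\le\delta_{\max}$, endpoint index $\le\delta_{\max}$, and $\ell\le n$, plus an $\mathcal{O}(\log q)$-bit value. That gives $\mathcal{O}(\log(qkn\delta_{\max}^2))$ bits per round and $\mathcal{O}((T+k)\log(qkn\delta^2_{\max}))$ bits in total.

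\textbf{Simulating $SQ(\bm{b})$.} The argument is identical, except that no column index is needed, giving one factor $\delta_{\max}$ and the bound $\mathcal{O}((T+k)\log(qkn\delta_{\max}))$. The vector $\bm{b}$ is held entirely by $\mathcal{P}_1$ and public data. Its block norms are $n\|\widetilde{\bm{g}}_j\|$, using $\|\overline{\bm{t}}_1\|=1$, and these are public. So we sample a block and an edge locally, then ask $\mathcal{P}_1$ for $\ell$.

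The main obstacle is the first step: checking that the blocks of $BG$ really line up with single players despite the $G$ factor. Multiplying by $G$ must not mix blocks, and this holds precisely because $B$ acts row-block-wise on edges and the edge sets incident to distinct independent vertices are disjoint. A second concern is that the sampling distributions be computable exactly from public norms. We handle this by invoking the normalization $\|\overline{\bm{t}}_j\|=1$; alternatively, the players announce their norms in the $\mathcal{O}(k)$ setup messages.
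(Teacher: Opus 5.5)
Your argument is correct, but it is organized differently from the paper's proof.

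\textbf{The paper's route.} It never analyzes $BG$ at the communication level. It applies \Cref{pro-disjoint} as a black box in two places:
\begin{itemize}
\item to $B$, whose first and last diagonal blocks are public and whose private blocks have total size $\mathcal{O}(kn\delta_{\max}^2)$;
\item to $\bm{b}$, whose private part has size $\mathcal{O}(kn\delta_{\max})$.
\end{itemize}
It then obtains $SQ(BG)$ from $SQ(B)$ with $\mathcal{O}(1)$ local overhead. The reason is that each row of $B$ has a single nonzero $B_{ij}$, so the corresponding row of $BG$ is $\pm B_{ij}$ on the two endpoints of edge $j$.

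\textbf{Your route.} You use the same structural fact, but you do not pass through $SQ(B)$. Instead you open the black box and write the protocol for $BG$ and $\bm{b}$ explicitly. The block squared norms ($2\delta_j$ for $BG$, $n^2\delta_j$ for $\bm b$) are public because $\|\overline{\bm t}_j\|=1$. So the coordinator picks the block and the edge locally, and asks only $\mathcal{P}_j$ (respectively $\mathcal{P}_1$) for one index $\ell$ and/or one value.

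\textbf{What each buys.} The paper's version is shorter and reuses the cited theorem. Yours is self-contained and makes explicit the index encoding that the paper leaves implicit when it replaces $mn$ in \Cref{pro-disjoint} by the total size of the private blocks. Each simulated operation in your protocol costs only $\mathcal{O}(\log(qn))$ bits, so you land cleanly under the stated bound. The paper's closing line instead reads $\mathcal{O}((T+k)\delta_{\max}\log(qkn))$, which does not match the theorem.

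\textbf{Two minor remarks.}
\begin{itemize}
\item Your opening claim that $BG$ ``matches the layout of \Cref{pro-disjoint} with $m=Mn$'' would, if applied literally, give $\log(qMnN)$ rather than the claimed bound. It is your explicit protocol, not that invocation, that proves the statement.
\item Your ``endpoint index $\le\delta_{\max}$'' is an overcount, since one bit suffices once the edge is known. This is harmless because the claim is only an upper bound.
\end{itemize}
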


\begin{proof}
\textbf{Cost of $SQ(\bm{b})$.}
Note that the vector $\bm{b}$ has vertical block structure. Since the first block is public and the last block is zero, both require no communication (see \cite[Remark 11]{mande2025lower}). The private blocks for $j = 2, \ldots, k$ each have size $\delta_j n$, giving a total size for all private blocks as $\mathcal{O}(kn\delta_{\max})$.  Again, each entry contains $\mathcal{O}(\log q)$ bits. By \cref{pro-disjoint}, $\mathcal{O}(T)$ uses of $SQ(\bm{b})$ cost $\mathcal{O}((T+k)\log(qkn\delta_{\max}))$ bits.

\textbf{Cost of $SQ(BG)$.} We proceed in two steps.

\textit{Step 1: Simulate $SQ(B)$ via communication.}
The matrix $B$ has block diagonal structure, where the first  and last diagonal blocks are public, and no communication is needed. The private blocks for $j = 2, \ldots, k$ each have size $\delta_j n \times \delta_j$, giving total size  of all private blocks as $O(kn\delta_{\max}^2)$. By \cref{pro-disjoint}, $\mathcal{O}(T)$ uses of $SQ(B)$ cost $\mathcal{O}((T+k)\log(qkn\delta_{\max}^2))$ bits.

\textit{Step 2: Construct $SQ(BG)$ from $SQ(B)$.} Since $B$ is block diagonal with each row having exactly one nonzero entry $B_{ij}$, each row $i$ of $BG$ has exactly two nonzero entries, which are $B_{ij}$ and $-B_{ij}$.
%\[
%(AG)_{i,e_{j(i)}^+} = A_{i,j(i)}, \qquad (AG)_{i,e_{j(i)}^-} = -A_{i,j(i)},
%\]
%where $e_{j(i)}^+$ and $e_{j(i)}^-$ are the two endpoints of edge $j(i)$.
Therefore, $SQ(BG)$ can be simulated directly from $SQ(B)$ with $\mathcal{O}(1)$ overhead per access:
Querying $(BG)(i,j)$ uses one query to $Q(B)$ and the public $G$, with the overhead cost $\mathcal{O}(1)$;
Then, computing $\| (BG)(i,:) \| =
	\sqrt{2} \| B(i,:) \|$ requires one query to $Q(B)$ with overhead cost $\mathcal{O}(1)$;
	Next, to get a row sample, since $\|(BG)(i,*)\|^2 = 2 \|B(i,*)\|^2$, the row distribution of $BG$ equals that of $B$. Therefore, we sample directly from $SQ(B)$ with overhead cost $\mathcal{O}(1)$; 
	Finally, for within-row sampling, since the two nonzero entries have equal magnitude, we sample uniformly using the public $G$ with overhead cost $\mathcal{O}(1)$.
    
Overall, $\mathcal{O}(T)$ uses of $SQ(BG)$ require only  $\mathcal{O}(T)$ uses of $SQ(B)$. Combining with \textit{Step~1}, we see that  $\mathcal{O}\left((T+k)\,\delta_{\max}\,\log(qkn)\right) $ bits suffice for $\mathcal{O}(T)$ uses of $SQ(BG)$.
Collecting all the results completes the proof. 
\end{proof}

One immediate consequence of the communication complexity defined in \Cref{thm:communication-complexity}, is getting the lower bound on the applications of $SQ(BG)$ and $SQ(\bm{b})$ required for solving the least-squares problem \eqref{eqn:LS-prob} given $B$ in \eqref{eqn:define-B} and $\bm{b}$ in \eqref{eqn:define-b}.  This result is summarized in the following corollary.

\begin{corollary}\label{coro:QIC-LS-lower-bound}
Any quantum-inspired algorithm that solves the least-squares problem, \eqref{eqn:LS-prob}, by making $T$ applications of $SQ(BG)$ and $SQ(\bm{b})$ must satisfy:
\begin{equation*}
T \;\geq\; \Omega\left(\frac{k^2}{\log (k \delta_{\max}^2)}\right).
\end{equation*}
\end{corollary}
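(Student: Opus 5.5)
The plan is a reduction: any quantum-inspired solver for \eqref{eqn:LS-prob} with $T$ uses of $SQ(BG)$ and $SQ(\bm{b})$ yields a communication protocol for the $k$-player Set-Disjointness problem, and the communication lower bound of \Cref{pro-4.2} then forces $T$ to be large. The argument has three steps.

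\textbf{Step 1: build the protocol.} Given inputs $T_1,\dots,T_k\in\{0,1\}^n$ with Hamming weight $\Theta(n)$ and $n\ge 3200k$, each player $\mathcal{P}_j$ forms $\overline{\bm{t}}_j$ locally. Player $\mathcal{P}_j$ then holds its private block of $B$ in \eqref{eqn:define-B}, namely $I_{(\delta_j)}\otimes\overline{\bm{t}}_j$. The blocks of $\bm{b}$ in \eqref{eqn:define-b} depend only on $\overline{\bm{t}}_1$ and the public $\widetilde{\bm{g}}_j$, so they are held by $\mathcal{P}_1$. The first and last blocks, as well as $G$, are public. The coordinator runs the quantum-inspired algorithm and answers each of its $SQ$ requests by simulation. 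By \Cref{thm:communication-complexity}, $\mathcal{O}(T)$ uses cost $\mathcal{O}((T+k)\log(qkn\delta_{\max}^2))$ bits.

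\textbf{Step 2: decode disjointness from the output.} The least-squares solution is $\bm{u}^*$ in \eqref{eqn:k-player-solution}, up to the constant null vector, since the system has pure Neumann boundary conditions. Its $j$-th entry ($j\ge2$) is $n\,\overline{\bm{t}}_1^T\overline{\bm{t}}_j$. Because the weights are $\Theta(n)$, this entry is $0$ when $T_1$ and $T_j$ are disjoint and $\Omega(1)$ otherwise. So, to decide disjointness, the coordinator must distinguish $\bm{u}^*$ with all coordinates $j=2,\dots,k$ equal to zero from $\bm{u}^*$ with some such coordinate of order $1$, relative to $\|\bm{u}^*\|$. In the quantum-inspired output model this is done by sampling from, or querying, the approximate solution. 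The algorithm's guarantee, either a constant-accuracy sample or query access to an approximation, lets the coordinator read off the $k$ coordinates $\{\bm{u}^*_j\}$ with $\mathcal{O}(k)$ further queries to the output. Running $\mathcal{O}(\log k)$ independent repetitions boosts the success probability to $1-1/k^3$.

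\textbf{Step 3: compare with the lower bound.} \Cref{pro-4.2} gives a lower bound of $\Omega(kn)$ on the communication. Taking the smallest admissible $n=\Theta(k)$, and noting that $q$ is polynomial in $n$ (the entries are normalized $0/1$ data), the protocol's cost $\mathcal{O}((T+k)\log(k\delta_{\max}^2))$ must be at least $\Omega(k^2)$. Rearranging gives $T=\Omega(k^2/\log(k\delta_{\max}^2))-k$, which is the claimed bound since $k=o(k^2/\log k)$.

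\textbf{Main obstacle.} The hard part is Step 2: making precise which output the quantum-inspired algorithm provides and at what accuracy. This matters because $\|\bm{u}^*\|$ may be as large as about $n\sqrt{k}$ when many sets intersect, so a relative-$\epsilon$ approximation might fail to separate a single $\Omega(1)$ entry from zero. I would first try to handle this by following the linear-regression argument of \cite{mande2025lower}. The idea is to exploit the fact that the $\bm{e}_1$ term is tiny compared with the other coordinates. An intersection then shifts most of the sampling mass away from index $1$, so a constant number of samples from the output distinguishes the two cases. That keeps the overhead outside $T$ at $\mathcal{O}(k)$ bits, which is absorbed in the $(T+k)$ factor.
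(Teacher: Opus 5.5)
Your proposal is essentially the paper's proof. \Cref{thm:communication-complexity} turns $T$ uses of $SQ(BG)$ and $SQ(\bm{b})$ into a coordinator protocol, sampling from $\bm{u}^*$ separates the disjoint case (all mass on index $1$) from the intersecting case (a constant fraction of the mass on some $j\ge 2$), and \Cref{pro-4.2} with $n=\Theta(k)$ yields the bound, so the sampling fallback in your last paragraph is exactly the paper's decoding step.

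Two small corrections. First, the $\bm{e}_1$ term is not tiny: with a single common element, $n\,\overline{\bm{t}}_1^T\overline{\bm{t}}_j=\Theta(1)$ is comparable to the $\bm{e}_1$ coefficient. This is harmless, since only a constant fraction of off-index mass is needed. Second, if you reach success probability $1-1/k^3$ by rerunning the algorithm $\mathcal{O}(\log k)$ times, you must charge that factor to $T$, which costs a $\log k$ in the final bound; the paper itself leaves this requirement of \Cref{pro-4.2} unaddressed.
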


\begin{proof}
We first show that the solution of the least-squares problem, \eqref{eqn:LS-prob}, that we constructed is equivalent to the Set-Disjointness problem.   Recall that 
\begin{align*}
\bm{u}^* = \bm{e}^N_1 +
n\sum_{j=2}^k\overline{\bm{t}}_1^T \overline{\bm{t}}_j \bm{e}^N_j.
\end{align*}
If there is no intersection for the $k$-player Set-Disjointness problem, i.e., no $j \in \{2, 3, \cdot, k\}$ such that $T_{1l} = T_{jl}$ (see \cref{disjointness}), then $\bm{u}^* = \bm{e}^{N}_1$.  Sampling $\bm{u}^*$ returns index $1$ with probability $1$.  If there is an intersection with one common index, then there is a $j \in \{ 2, \cdots, k \}$ such that
\begin{align*}
	\bm{u}^* = \bm{e}^N_1 +
	n \overline{\bm{t}}_1^T \overline{\bm{t}}_j \bm{e}^N_j.
\end{align*}
Note that $n \overline{\bm{t}}_1^T \overline{\bm{t}}_j = \Theta(1)$ since we assume the Hamming weight is $\Theta(n)$.  Then, with constant probability, we get index $j$ by sampling from the distribution defined by $\bm{u}^*$.  Thus, if we can sample from $\bm{u}^*$, we can solve the $k$-player Set-Disjointness problem using a constant number of such samples. 
		
This connection allows us to invoke \Cref{pro-4.2}.  By \Cref{thm:communication-complexity}, a quantum-inspired algorithm of complexity $T$ implies a communication protocol of complexity $\mathcal{O}((T+k)\log(qkn \delta_{\max}^2))$. By \Cref{pro-4.2}, the $k$-player Set-Disjointness problem with $n \geq 3200k$ requires $\Theta(kn)$ communication. Therefore, choosing $n = \Theta(k)$, we have
\begin{align*}
\mathcal{O}\left((T+k)\,\log(qk \delta^2_{\max})\right) \;\geq\; \Theta(k^2) \implies T \;\geq\; \Omega\left(\frac{k^2}{\log (k \delta_{\max}^2)}\right),
\end{align*}
which finishes the proof. 
\end{proof}

Finally, tying this all together, solving the least-squares problem, \eqref{eqn:LS-prob}, is equivalent to solving an  instance of the Poisson equation using linear finite elements.  Therefore, we establish the following lower bound for any quantum-inspired algorithm for solving Poisson's equation with finite elements.

%\textcolor{blue}{JA: this result is worded funny.  I'm not sure it's saying what you want.}
\begin{theorem}[Lower bound for solving quantum-inspired FEM]
	\label{thm:laplacian_lower_bound}
Consider the Laplace equation with pure Neumann boundary conditions discretized with linear finite-elements on a uniform mesh, then any quantum-inspired classical algorithm that solves the resulting discrete linear system must make at least $\displaystyle \Omega\left( \frac{\epsilon^{-2d}}{d \log (\epsilon^{-1})} \right)$ calls to $SQ(BG)$ and $SQ(\bm{b})$.
\end{theorem}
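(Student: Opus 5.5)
The plan is to specialize \Cref{coro:QIC-LS-lower-bound} to the finite-element setting by choosing the number of players $k$ as large as the mesh allows, and then rewrite the bound in terms of $\epsilon$ and $d$.

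The first step is the reduction. By the construction preceding \Cref{thm:communication-complexity}, solving the least-squares problem \eqref{eqn:LS-prob} is equivalent to solving $A\bm{u}^*=\bm{f}$ with $A=h^{d-2}G^TG$. This $A$ is exactly the pure-Neumann linear finite-element stiffness matrix on the uniform mesh. Hence any quantum-inspired algorithm that solves the discrete Poisson system yields samples from $\bm{u}^*$. Those samples decide the $k$-player Set-Disjointness instance embedded in $\bm{u}^*$. The scaling by $h^{d-2}$ does not change sampling distributions and only rescales the right-hand side, so it does not affect the access model.

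The second step is choosing $k$ as large as possible. The construction needs an independent set $\{v_1,\dots,v_k\}$ in the sparsity graph of $A$. On a uniform mesh that graph is (a subgraph of) the $d$-dimensional grid graph, which is bipartite via the checkerboard coloring. It therefore contains an independent set of size $\Theta(N)$, and I would take $k=\Theta(N)=\Theta(h^{-d})=\Theta(\epsilon^{-d})$, using \eqref{h-epsilon} and \eqref{eqn:estimate-N}. The maximal degree of the sparsity graph is $\delta_{\max}=\Theta(d)$, namely $2d$ neighbors in the grid. Choosing $n=\Theta(k)$ with $n\ge 3200k$ satisfies \Cref{pro-4.2}.

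The third step substitutes into \Cref{coro:QIC-LS-lower-bound}:
\[
T\ \ge\ \Omega\!\left(\frac{k^2}{\log(k\delta_{\max}^2)}\right)=\Omega\!\left(\frac{\epsilon^{-2d}}{\log(\epsilon^{-d}d^2)}\right)=\Omega\!\left(\frac{\epsilon^{-2d}}{d\log(\epsilon^{-1})+2\log d}\right),
\]
which is $\Omega\big(\epsilon^{-2d}/(d\log\epsilon^{-1})\big)$ once $\log d\lesssim d\log\epsilon^{-1}$, i.e.\ for $\epsilon$ bounded away from $1$.

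The step I expect to be the main obstacle is justifying that $k$ can be $\Theta(N)$ while keeping the construction intact. Two points need checking. First, the nonzero blocks of $\bm{x}^*$ for distinct $v_j$ must not overlap, which requires an independent set rather than merely distinct nodes. Second, the boundary nodes of the Neumann problem have smaller degree, but $\delta_{\max}=\Theta(d)$ still holds. If the bipartite argument is doubted, a fallback is a greedy bound: any graph with maximal degree $\delta_{\max}$ has an independent set of size at least $N/(\delta_{\max}+1)=\Theta(N/d)$. That bound would only lose a $\operatorname{poly}(d)$ factor, which the stated form can absorb modulo constants in $d$. I would also remark that the success probability $1-1/k^3$ required in \Cref{pro-4.2} is obtained by boosting with $\mathcal{O}(\log k)$ repetitions of the constant-probability sampler. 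Those repetitions cost at most a logarithmic factor, which is already reflected in the denominator.
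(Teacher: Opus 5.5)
Your proposal is correct and has the same skeleton as the paper's proof: invoke \Cref{coro:QIC-LS-lower-bound}, use $\delta_{\max}=\Theta(d)$ and $N=\Theta(\epsilon^{-d})$, and take $k$ as large as possible. It differs in the one step that controls the $d$-dependence, namely the size of the independent set.

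\textbf{The size of $k$.} The paper uses a greedy independent set with $k=\Theta(N/\delta_{\max})=\Theta(\epsilon^{-d}/d)$. Substituted literally, this gives $k^2/\log(k\delta_{\max}^2)=\Theta(\epsilon^{-2d}/(d^{3}\log\epsilon^{-1}))$. The stated denominator $d\log(\epsilon^{-1})$ is therefore reached only by silently dropping a factor $d^{2}$. Your checkerboard argument avoids this. The sparsity graph is the $d$-dimensional grid graph, consistent with the paper's remark that the diagonal stiffness entries vanish. It is therefore bipartite, with a color class of size at least $N/2$. This gives $k=\Theta(\epsilon^{-d})$ with no loss in $d$. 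Combined with $\log(k\delta_{\max}^2)=d\log\epsilon^{-1}+\mathcal{O}(\log d)=\mathcal{O}(d\log\epsilon^{-1})$ for $\epsilon\le 1/2$, it produces exactly the stated bound. So your primary route is what actually justifies the theorem as written. For the same reason, your fallback sentence is wrong: the greedy bound loses $d^{2}$, and an $\Omega(\cdot)$ with $d$ as a free parameter cannot absorb that.

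\textbf{The boosting remark.} Drop it. $\mathcal{O}(\log k)$ repetitions multiply $T$ by $\log k=\Theta(d\log\epsilon^{-1})$. That factor is separate from the bits-per-access term $\log(qkn\delta_{\max}^2)$, so it is not ``already reflected in the denominator''; accounting for it would weaken the bound by another such factor. In any case, the success probability demanded by \Cref{pro-4.2} is an issue inside \Cref{coro:QIC-LS-lower-bound}, which the paper does not address either. You are entitled to take that corollary as given when deriving the theorem.
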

\begin{proof}
From \Cref{coro:QIC-LS-lower-bound}, we know that 
$T \geq \Omega\left(\frac{k^2}{\log (k \delta_{\max}^2)}\right)$.  For the sparsity graph, we have $\delta_{\max} = \Theta(d)$.  Recall that, by construction, $k$ is the size of the maximal independent set.  Choosing different $k$ corresponds to solving different Poisson problems (see our construction \eqref{eqn:LS-prob}).  Thus, to make the problem difficult for quantum-inspired classical algorithms, we choose $k$ to be as large as possible. It is well-known that a greedy algorithm for finding an approximate maximal independent set can achieve $k = \Theta(N/\delta_{\max})$.  Thus, with $N = \Theta(\epsilon^{-d})$, we have
\begin{align*}
 T \geq \Omega \left( \frac{\epsilon^{-2d}}{d \log (\epsilon^{-1})}  \right). 
\end{align*}
This completes the proof. 
\end{proof}
Again, we see that the lower bound on complexity is exponential in dimension.  Thus, no quantum-inspired could, based on this communication complexity framework, ever overcome the ``\textit{curse of dimensionality}''.

\begin{remark}
\Cref{thm:laplacian_lower_bound} shows that there exists a specific linear system, obtained from discretizing a Laplace equation, for which any quantum-inspired classical algorithm requires complexity at least exponential in the dimension. This lower bound suffices to demonstrate the gap between quantum-inspired classical algorithms and quantum algorithms for solving PDEs. Moreover, \Cref{thm:laplacian_lower_bound} also covers any quantum-inspired classical algorithm that uses preconditioning, since such algorithms form a special subset of the general quantum-inspired classical algorithms considered here.
\end{remark}

\section{Conclusions}\label{sec:conclusions}
In summary, using the RCD method, a state-of-the-art\newline quantum-inspired classical algorithm for solving SPD linear systems, we establish upper and lower complexity bounds for both the original (unpreconditioned) and expanded (preconditioned) systems arising from FEM discretizations of the high-dimensional Poisson equation. Because both the upper and lower bounds depend exponentially on the dimension $d$, we conclude that achieving an exponential speedup with respect to $d$ is impossible when applying this quantum-inspired algorithm to the FEM. This directly resolves \Cref{ques1}, posed at the outset. 

Furthermore, to address \Cref{ques2}, we carry out a lower bound analysis for general quantum-inspired classical algorithms based on quantum communication complexity. As shown in \Cref{thm:laplacian_lower_bound}, the resulting lower bound again scales exponentially with the dimension $d$. This rigorously establishes that such classical methods cannot achieve an exponential speedup in dimension, rendering them non-competitive with true quantum PDE solvers in high-dimensional settings.

Our lower bound analysis for general quantum-inspired classical algorithms is established primarily through the framework of quantum communication complexity and its associated protocols. Looking forward, a promising research direction is to refine these techniques to derive tighter, ideally optimal, lower bounds for these tasks. From an algorithmic perspective, we also plan to extend quantum-inspired classical frameworks to more advanced discretization schemes. Specifically, we aim to investigate their efficiency when coupled with the mixed finite-element method for solving high-dimensional coupled PDEs, exploring whether similar computational barriers or potential speedups arise in the saddle-point setting.

%\section*{Acknowledgments}

\bibliographystyle{siamplain}
\bibliography{ref}
\end{document}